\date{19th March 2021}
\date{\today} 
\newtheorem{thm}{Theorem}[section] 
\newtheorem{dfn}[thm]{Definition} 
\newtheorem{pro}[thm]{Proposition} 
\newtheorem{cor}[thm]{Corollary} 
\newtheorem{con}[thm]{Conjecture} 
\newtheorem{lem}[thm]{Lemma}
\def\tc{\operatorname{tc}}
\def\TC{\operatorname{TC}}
\def\cat{\operatorname{cat}}
\def\integral{\mathbb{Z}}
\def\homeo{\approx}
\def\StrColl{\searrow\hskip-.6em\searrow}
\def\eStrColl{\searrow\hskip-.6em\searrow{\vphantom{(}}^{\hskip-.7em e}\hskip.35em}
\def\StrExp{\nearrow\hskip-.6em\nearrow}
\def\eStrExp{\nearrow\hskip-.6em\nearrow{\vphantom{(}}_{\hskip-.7em e}\hskip.35em}
\def\double#1{#1 \!\times\! #1}
\newcommand{\Cir}{\mathbb{S}}
\newcommand{\Height}{\operatorname{ht}}
\def\Angle#1{[#1]}
\begin{document}
\baselineskip21pt

\title{Topological complexity of Khalimsky circles}
\author{Ryusei Yoshise}
\email{yoshise.ryusei.597@s.kyushu-u.ac.jp}
\address{Joint Graduate School of Mathematics for Innovation, Kyushu University, Fukuoka 819-0395, Japan}  
\keywords{finite space, topological complexity, digital topology} 
\subjclass[2010]{Primary:55M30, Secondary:06A07,54H30,55P10}

\begin{abstract}
We determine topological complexity of a series of finite spaces which is weakly homotopy equivalent to a circle $S^1$, and give a finite space $X$ satisfying the inequality $\tc(X) < \cat(\double{X})$.
This answers two conjectures on topological complexity for finite spaces raised by K.~Tanaka in \cite{MR3773738}.
\end{abstract}

\maketitle

The topological complexity was first introduced by M.~Farber in \cite{MR1957228} as a numerical homotopy invariant in a relation with the robot motion planning problem. 
It attracts many authors such as Mark Grant, Jesus Gonz\'alez, Don Davis, Lucil Vandembroucq, Alexander Dranishnikov, Norio Iwase and etc.
Recently, a number of authors defined versions of topological complexity for a discrete or a finite space. For example, a combinatorial complexity is introduced by K.~Tanaka \cite{MR3773738}, a discrete topological complexity is introduced by D.~Fern\'andez-Ternero, E.~Mac\'ias-Virg\'os, E.~Minuz, and J.~A.~Vilches \cite{MR3834677} and a simplicial complexity is introduced by J.~Gonz\'alez \cite{MR3778506}. 

Topological complexity of a space $X$ is defined as follows:
for a space $X$, let $PX$ be the path space on $X$, or more precisely, all continuous maps from the unit interval $[0,1]$ to $X$, which is equipped with a projection $\pi : PX \to \double{X}$ given by $\pi(\gamma) = (\gamma(0), \gamma(1))$.
The topological complexity $\tc(X)$ of a space $X$ is the minimal number $m \geq 0$ such that $\double{X}$ is covered by $m\!+\!1$ open subsets each on which there is a section to $\pi$, i.e., section-categorical (cf. James \cite{MR1361912}).
Let $\pi_{0} : LX \to X$ be the restriction of $\pi$ to $\{\ast\}\!\times\!X \homeo X$.
Then we obtain a similar but a more classical invariant called L-S category $\cat{X}$, which is the minimal number $m \!\geq\! 0$ such that $X$ is covered by $m\!+\!1$ open subsets on which there is a section to $\pi_{0}$. Since $LX$ is contractible, each open set of the covering is contractible in $X$, in other words, categorical. 
Then the following inequality is well known:
$$
\cat(X) \le \tc(X) \le \cat(\double{X}) \le (\cat(X)\!+\!1)^{2}-1.
$$

In this paper, we consider the topological complexity and the L-S category of a finite space, though it is completely determined by the combinatorial structures. 

Let us denote by $\Cir^1_n$ the Khalimsky circle of $2n$-points, for $n \!\ge\! 2$:
$$
\Cir^1_n = \{ a_0, a_1, \ldots , a_{n-1}; b_0, b_1, \ldots , b_{n-1} \},
$$
whose topology is determined by the open sub-basis $\{a_i,  b_i, a_{i+1}\}$ where $i$ runs over $\integral/n$. Then $\mathcal{K}(\Cir^1_n)$ is a circle complex with $n$ zero-cells and $n$ one-cells. 
We remark that $\Cir^1_n$ is weakly homotopy equivalent with the topological circle $S^{1}$, more precisely, there is a weak homotopy equivalence from $S^{1}=|\mathcal{K}(\Cir^1_n)|$ to $\Cir^1_n$.
So, Khalimsky circles are good example to observe the difference between the topological complexity of the realization and that of a finite space.

Recently, Tanaka showed $\tc(\Cir^1_2)=\cat(\double{\Cir^1_2})=3$ and $\tc(\Cir^1_3)=\cat(\double{\Cir^1_3})=2$ in \cite{MR3773738}, while we know $\tc(S^{1})=1$ and $\cat(\double{S^{1}})=2$ for a topological circle $S^{1}$.
More recently, S.~Kandola showed $\cat(\Cir^1_n \times \Cir^1_n) = 2$ for all $n \geqq 3$ in \cite{MR4035463}.
In \cite{MR3773738}, K.~Tanaka raised two conjectures on the topological complexity of finite spaces.
\begin{con}\label{con:tanaka}
\begin{enumerate}
\item\cite[Conjecture 4.16]{MR3773738}\label{con:one}
$\tc(\Cir^1_{2^{k}}) = 2$  for any $k \!\ge\! 1$.
\item\cite[Conjecture 3.11]{MR3773738}\label{con:two}
There is a finite space $X$ satisfying $\tc(X) < \cat (\double{X})$.
\end{enumerate}
\end{con}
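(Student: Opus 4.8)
\medskip
\noindent\emph{Proof proposal.}
The plan is to handle the two parts separately. For part~(1) I would determine $\tc(\Cir^1_n)$ for every $n\ge 2$; this confirms part~(1) for the powers $2^{k}$ with $k\ge 2$ and recovers the $k=1$ exception already recorded by Tanaka. For part~(2) I would exhibit one explicit finite space $X$ and compute both $\tc(X)$ and $\cat(\double X)$ exactly, arranging $\tc(X)<\cat(\double X)$.

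For part~(1), the upper bound $\tc(\Cir^1_n)\le 2$ for $n\ge 3$ is immediate from $\tc(\Cir^1_n)\le\cat(\double{\Cir^1_n})$ together with Kandola's $\cat(\double{\Cir^1_n})=2$. A self-contained proof would instead produce three open subsets covering $\double{\Cir^1_n}$ on each of which the two coordinate projections $p_{1},p_{2}\colon U\to\Cir^1_n$ are homotopic: by the exponential law for $[0,1]$ a continuous section of $\pi$ over $U$ is exactly such a homotopy, and a homotopy between maps of finite spaces is a fence of pointwise-comparable maps, which one can write down by hand after sorting the pairs according to whether their coordinates lie in a common minimal open set or should be joined through the ``left'' or the ``right'' arc of $\Cir^1_n$. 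With Tanaka's $\tc(\Cir^1_2)=3$ this leaves the lower bound $\tc(\Cir^1_n)\ge 2$ for $n\ge 3$.

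That lower bound is the crux, and it is intrinsically a finite-space statement: $\Cir^1_n$ has the cohomology ring of $S^{1}=|\mathcal K(\Cir^1_n)|$, so the zero-divisor cup-length --- and indeed, since $\tc(S^{1})=1$, any lower bound for $\tc$ that is invariant under weak homotopy equivalence --- yields only $\tc(\Cir^1_n)\ge 1$. I would argue with the open-set lattice instead. Suppose $\double{\Cir^1_n}=U_{0}\cup U_{1}$ with $p_{1}|_{U_{i}}\simeq p_{2}|_{U_{i}}$; then for every loop $\ell$ in $U_{i}$ functoriality of $\pi_{1}$ forces the winding numbers of $p_{1}\circ\ell$ and $p_{2}\circ\ell$ in $\pi_{1}(\Cir^1_n)\cong\integral$ to agree, i.e.\ the inclusion $U_{i}\hookrightarrow\double{\Cir^1_n}$ carries loops into the diagonal subgroup of $\pi_{1}(\double{\Cir^1_n})\cong\integral^{2}$. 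The heart of the argument is to show that $\double{\Cir^1_n}$ admits no open cover by two such ``winding-balanced'' sets once $n\ge 3$: since open sets of a finite space are unions of the minimal neighbourhoods $U_{x}\times U_{y}$, one should be able to exhibit, for any candidate pair $(U_{0},U_{1})$, a loop in $\double{\Cir^1_n}$ that is winding-unbalanced inside every member of the cover containing it, in the spirit of Tanaka's computation for $\Cir^1_3$. Making this obstruction clean and uniform in $n$ is the step I expect to be the main difficulty.

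For part~(2) I would take a concrete finite space $X$ --- a product of Khalimsky circles, a finite model of $S^{2}$, or a deliberately non-minimal finite model of $S^{1}$ are the natural first candidates --- compute $\tc(X)$ by the methods of part~(1), and then bound $\cat(\double X)$ from below \emph{strictly} past $\tc(X)$, so as to obtain $\tc(X)=k<k+1=\cat(\double X)$. For the lower bound on $\cat(\double X)$ I would combine a cup-length estimate in $H^{*}(\double X)$ with, if needed, a Kandola-style combinatorial bound for the L-S category of a finite space. The real work here is pinning $\cat(\double X)$ down exactly rather than only up to the slack in $\cat(\double X)\le(\cat X+1)^{2}-1$; the verification, not the choice of $X$, is where the effort lies.
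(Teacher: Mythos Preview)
Your proposal rests on a mistaken expectation: you set out to \emph{confirm} part~(1), and the crux you identify is the lower bound $\tc(\Cir^1_n)\ge 2$ for all $n\ge 3$. That inequality is false. The paper shows
\[
\tc(\Cir^1_n)=\begin{cases}3,&n=2,\\2,&n=3,4,\\1,&n\ge5,\end{cases}
\]
so part~(1) is answered in the \emph{negative}: already $\tc(\Cir^1_8)=1$. Your proposed obstruction --- ``$\double{\Cir^1_n}$ admits no open cover by two winding-balanced sets once $n\ge3$'' --- therefore cannot hold; for $n\ge5$ such a cover exists. The paper constructs it explicitly: an open set $U\subset\double{\Cir^1_n}$ built from five rectangular blocks, together with the minimal open neighbourhood $V$ of its complement. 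The point is that $U$ deformation retracts onto a diagonal Khalimsky circle $C\cong\Cir^1_{2k+m+2}$ on which both projections have degree~$1$, and likewise for $V$. The tool that makes this work is a homotopy classification of maps between Khalimsky circles: two distinct maps $f,g\colon\Cir^1_m\to\Cir^1_n$ are homotopic iff $\deg f=\deg g$ \emph{and} $|\deg f|<m/n$. Equal degree alone does not suffice in the finite world; the slack $m/n>1$ is exactly what becomes available once $n\ge5$ and the retract $C$ is long enough. Your winding-number heuristic captures the degree constraint but misses this second condition, which is why your lower bound argument would work for $n=4$ (where the paper does show $\tc=2$, via a $2$-coloring analysis of the $4\times4$ grid) and fail for $n\ge5$.

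For part~(2) the paper does not hunt for a separate example: the same computation gives $\tc(\Cir^1_n)=1<2=\cat(\double{\Cir^1_n})$ for every $n\ge5$, the right-hand side being Kandola's value. So the two conjectures are settled by a single calculation, and your plan to look among products, $S^2$-models, or non-minimal $S^1$-models is unnecessary detour.
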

The purpose of this paper is to answer the above questions. More precisely, we answers negative to (\ref{con:one}) and positive to (\ref{con:two}) by giving the following result.

\begin{thm} \label{tc cir}
The combinatorial complexity of Khalimsky circle $\Cir^1_n$ is given as follows:
	$$ \tc(\Cir^1_n)= \begin{cases} 3, & n = 2,  \\ 2, & n = 3, 4, \\ 1, & n \geq 5. \end{cases} $$
\end{thm}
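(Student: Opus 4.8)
The plan is to reduce the statement to two new facts and to derive each by analogy with the classical equality $\tc(S^1)=1$.

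\smallskip
\noindent\emph{Reductions.} For $n=2$ and $n=3$ the asserted values are exactly Tanaka's theorems recalled above, so assume $n\ge 4$. From Kandola's equality $\cat(\double{\Cir^1_n})=2$ (valid for $n\ge 3$) and the inequality $\tc(X)\le\cat(\double X)$ of the introduction we get $\tc(\Cir^1_n)\le 2$ for all $n\ge 4$; and $\tc(\Cir^1_n)\ge 1$ because $\Cir^1_n$, being weakly equivalent to $S^1$, is not contractible. Hence only two things remain: $\tc(\Cir^1_4)\ge 2$, and $\tc(\Cir^1_n)\le 1$ for $n\ge 5$. Throughout we use the standard reformulation, valid for finite spaces since the unit interval is locally compact: by the exponential law a partial section of $\pi$ over an open set $U\subseteq\double X$ is the same as a homotopy between the two projections $p_1,p_2\colon U\to X$, i.e.\ — these being maps of finite spaces — a finite sequence $p_1=f_0,f_1,\dots,f_k=p_2$ of order-preserving maps $U\to X$ with consecutive terms comparable. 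Thus $\tc(X)\le m$ if and only if $\double X$ is covered by $m+1$ open sets on each of which the two projections are homotopic.

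\smallskip
\noindent\emph{Upper bound for $n\ge 5$.} We imitate the two-set cover proving $\tc(S^1)=1$. Assign to the points of $\Cir^1_n$ positions in $\integral/2n$ by $a_i\mapsto 2i$, $b_i\mapsto 2i+1$; then the Hasse diagram is the $2n$-cycle and passing to a smaller point changes a position by $0$ or $\pm 1$. Let $C_\Delta$ be the smallest closed (i.e.\ up-closed) subset of $\double{\Cir^1_n}$ containing the diagonal $\Delta\homeo\Cir^1_n$, and $C_A$ the smallest closed subset containing the antipodal locus $\{(x,y):\text{position}(y)-\text{position}(x)=n\}$. A short computation of these up-closures shows $C_\Delta$ involves only pairs of positions differing by at most $2$, while $C_A$ involves only pairs differing by $n$ up to $\pm2$; consequently $C_\Delta\cap C_A=\emptyset$ exactly when $n\ge 5$. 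Put $V_1=\double{\Cir^1_n}\setminus C_A$ and $V_2=\double{\Cir^1_n}\setminus C_\Delta$: these are open and, since $C_\Delta\cap C_A=\emptyset$, they cover $\double{\Cir^1_n}$. On $V_1$ (whose realization is the torus with a collar of the $(1,1)$-curve $C_A$ removed) we exhibit a zig-zag of order-preserving maps realizing the classical homotopy that drags the second coordinate onto the first along the short arc between them; on $V_2$ we exhibit one realizing the homotopy that winds the second coordinate onto the first in the direction of increasing position. Each is well defined on the relevant set because there the two coordinates are never close (resp.\ never almost antipodal). The crucial point — and the place where $n\ge 5$ enters — is that on these \emph{proper} open sets the intermediate maps of the zig-zag can be kept order-preserving, whereas no such homotopy exists on all of $\double{\Cir^1_n}$ (which is why $\tc(\Cir^1_n)>0$). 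This gives $\tc(\Cir^1_n)\le 1$, hence $=1$.

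\smallskip
\noindent\emph{Lower bound for $n=4$.} Suppose for contradiction that $\double{\Cir^1_4}=U_1\cup U_2$ with the two projections homotopic on each $U_i$. This cannot be ruled out cohomologically: $\Cir^1_4$ has the cohomology of $S^1$, so its zero-divisor cup-length is $1$, giving only $\tc\ge 1$; we use the non-Hausdorff structure instead. The space $\Cir^1_4$ has exactly four orientation-reversing automorphisms $\beta$ (the reflections fixing an $a$- or a $b$-vertex); the graphs $L_\beta=\{(x,\beta(x))\}\homeo\Cir^1_4$ are pairwise disjoint and together contain all $16$ maximal and all $16$ minimal points of $\double{\Cir^1_4}$, and on each $L_\beta$ the two projections restrict (via $L_\beta\homeo\Cir^1_4$) to the identity and to $\beta$, of degrees $+1$ and $-1$, which are not homotopic. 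Hence every $L_\beta$ must meet both $U_1$ and $U_2$, so each of the disjoint up-sets $C_i=\double{\Cir^1_4}\setminus U_i$ contains at least one extreme point of each $L_\beta$; tracking these points, together with the way up-sets propagate to the extreme points through the remaining $32$ ``mixed'' points $(a_i,b_j),(b_i,a_j)$, a finite inspection of the $64$-point poset $\double{\Cir^1_4}$ shows no such $U_1,U_2$ exist. Therefore $\tc(\Cir^1_4)\ge 2$, hence $=2$.

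\smallskip
\noindent The main obstacle is the realization of the two winding homotopies as order-preserving zig-zags in the upper-bound step: this is the technical heart, and it is exactly there that the threshold between $n=4$ and $n\ge 5$ is forced. The finite case analysis required for the $n=4$ lower bound is the second, lesser, point needing care.
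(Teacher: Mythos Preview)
Your reductions are clean, but both of the steps you flag as ``needing care'' are actual gaps, and the first one is fatal as stated.

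\textbf{Upper bound for $n\ge 5$.} The cover $V_1,V_2$ you propose does \emph{not} have $p_1|_{V_i}\simeq p_2|_{V_i}$. Take $n=5$ and let $\sigma$ be the rotation $a_i\mapsto a_{i+1}$, $b_i\mapsto b_{i+1}$. The shifted diagonal $L_\sigma=\{(x,\sigma(x))\}$ has all position differences equal to~$2$, hence $L_\sigma\subset V_1$; it is a copy of $\Cir^1_5$, and on it $p_1$ is the identity while $p_2$ is $\sigma$. Both are degree-$1$ self-maps of $\Cir^1_5$, and the paper's Theorem~\ref{cir map} (two distinct maps $\Cir^1_m\to\Cir^1_n$ are homotopic iff $|\deg|<m/n$) shows that \emph{no} two distinct degree-$1$ maps $\Cir^1_n\to\Cir^1_n$ are homotopic. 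Thus $p_1|_{L_\sigma}\not\simeq p_2|_{L_\sigma}$, so $V_1$ is not section-categorical; the same argument with $\sigma^2$ kills $V_2$. The moral is that, in the finite world, a ``thin'' band around any $(1,1)$-curve always contains another $(1,1)$-curve of the minimal size $\Cir^1_n$, and that is exactly what obstructs the homotopy. The paper avoids this by using an L-shaped open set $U$ which it explicitly strong-collapses to a Khalimsky circle $C$ of length roughly $3n$; only because $C\cong\Cir^1_m$ with $m>n$ does Theorem~\ref{cir map} then give $p_1|_C\simeq p_2|_C$. So the upper bound really rests on that classification theorem (proved in the appendix), not merely on imitating the continuous picture.

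\textbf{Lower bound for $n=4$.} Your observation about the reflection graphs $L_\beta$ is correct, but the constraints it yields do not suffice: each $L_\beta$ picks out four of the sixteen maximal squares, and the paper's coloring~(I) in Proposition~\ref{cube} already satisfies ``each $L_\beta$ meets both colours'' (as well as the row/column constraints), yet it corresponds to an open set that is not section-categorical. The paper closes the argument by first reducing to \emph{principal} covers, then classifying the two admissible $2$-colorings, and finally locating inside each an off-diagonal copy of $\Cir^1_4$ on which $p_1\ne p_2$ forces $p_1\not\simeq p_2$ by Theorem~\ref{cir map} again. Your ``finite inspection'' would have to reproduce an argument of this kind; the $L_\beta$'s alone do not do it.
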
 

\section{Finite $T_{0}$-spaces}\label{sect:background}
 
A finite space has a finite $T_{0}$-subspace as its strong deformation retract (see J.~A.~Barmak \cite[Remark 1.3.2]{MR3024764} for example).
So from a homotopy-theoretical view point, there are no difference between a finite space and a finite $T_{0}$-space.
From now on, we work in the category of finite $T_{0}$-spaces and continuous maps between them, unless otherwise stated. 

Firstly, P.~Alexandroff pointed out the one-to-one correspondence between a finite $T_{0}$-space and a finite poset in \cite{MR4125}: for a given finite $T_{0}$-space $X$ with a family of open sets $\mathcal{O}$ as its topology, we define a relation $\prec_{\mathcal{O}}$ as follows: for $x, y \in X$, 
$$
x \prec_{\mathcal{O}} y \overset{\text{def}}\iff 
\text{for any $U \in \mathcal{O}$, $y \in U$ implies $x \in U$.}
$$
Then $\prec_{\mathcal{O}}$ gives a poset structure on $X$, since $\mathcal{O}$ is a $T_{0}$-topology.
Conversely, for a finite poset $P$ with poset structure $\prec$, we define a topology $\mathcal{O}_{\prec}$ on $P$: for $V \subset P$,
$$
V \in \mathcal{O}_{\prec} \overset{\text{def}}\iff 
\text{$x \in V$ iff $x \prec y$ for some $y \in V$.}
$$
Then $\mathcal{O}_{\prec}$ is a $T_{0}$-topology on $P$.
So a finite $T_{0}$-space is a finite poset, and vice versa.
This implies that a homotopy-theoretical properties of finite spaces are described combinatorially.

Secondly, C.~McCord showed a tight connection between a finite poset and a finite abstract simplicial complex (asc for short) in \cite{MR1793197}: for a finite poset $P$, we obtain an order complex $\mathcal{K}(P)$ such that $\sigma \in \mathcal{K}(P)$ $\iff$ $\sigma$ is a linearly ordered subset of $P$. 
Conversely, for a finite simplicial complex $K$, we obtain a finite poset $\chi(K)=(K,\prec)$ such that $\tau \prec \sigma$ $\iff$ $\tau \subset \sigma$.
We remark that $\mathcal{K}(\chi(K))$ is nothing but the barycentric subdivision of $K$.

In \cite{MR1793197}, McCord further obtained that, for a finite $T_{0}$-space $X$ which is in fact a poset, there is a weak homotopy equivalence from $|\mathcal{K}(X)|$ to $X$, and also that, for a finite simplicial complex $K$, there is a weak homotopy equivalence $|K| \to \chi(K)$. But unfortunately, it does not imply $\tc(|\mathcal{K}(X)|) = \tc(X)$ nor $\cat(|\mathcal{K}(X)|) = \cat(X)$. 
For instance, we have $\tc(S^{1})=1 \not= 3 = \tc(\Cir^1_2)$ and $\cat(\double{S^{1}})=2 \not= 3 = \cat(\double{\Cir^1_2})$. 

Even for maps $f,g: X \to Y$ between finite $T_0$-spaces, we adopt the usual notation of homotopy, i.e. $f \simeq g$ if and only if there exists a continuous map $H : X \times I \to Y$ such that $H(x,0) = f(x)$ and $H(x,1) = g(x)$.
In that case, we say $f$ and $g$ are homotopic as usual.

\begin{thm}[Stong]
Let $f,g: X \to Y$ be two maps between finite $T_0$-spaces.
Then, $f \simeq g$ if and only if there exists a sequence of maps $f_0, f_1, \ldots, f_n$ such that
\[f = f_0 \prec f_1 \succ f_2 \prec \cdots f_n = g.
\]
\end{thm}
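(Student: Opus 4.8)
The plan is to prove the two directions of Stong's criterion separately; the ``if'' part is elementary and reduces at once to a single comparison, while the substance is in the ``only if'' part. Throughout, for $y$ in a finite $T_0$-space I write $U_y$ for its minimal open set, i.e. the set of points $\preceq y$, and I shall repeatedly use the standard fact that a map $F$ into a finite $T_0$-space is continuous if and only if, for every point $z$ of its domain, $F$ carries some open neighbourhood of $z$ into $U_{F(z)}$.

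\emph{The ``if'' direction.} Since homotopies concatenate, it suffices to show that a single comparison $f \prec g$ (meaning $f(x) \preceq g(x)$ for every $x \in X$ in the order of $Y$) already yields $f \simeq g$. For this I would write down the explicit homotopy $H : X \times I \to Y$ with $H(x,t) = f(x)$ for $0 \le t < 1$ and $H(x,1) = g(x)$. The only thing to verify is continuity of $H$. At a point $(x,t)$ with $t < 1$ the open box $f^{-1}(U_{f(x)}) \times [0,1)$ is a neighbourhood of $(x,t)$ that $H$ sends into $U_{f(x)} = U_{H(x,t)}$, using continuity of $f$. At a point $(x,1)$ the open box $g^{-1}(U_{g(x)}) \times I$ is a neighbourhood that $H$ sends into $U_{g(x)}$: on the slice $t = 1$ this is continuity of $g$, while on slices $t < 1$ it is exactly here that $f \prec g$ enters, since for $x'$ with $g(x') \preceq g(x)$ one also has $f(x') \preceq g(x') \preceq g(x)$.

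\emph{The ``only if'' direction.} Given a homotopy $H : X \times I \to Y$ with $H(\,\cdot\,,0) = f$ and $H(\,\cdot\,,1) = g$, write $H_t := H(\,\cdot\,,t) : X \to Y$, each of which is continuous as a restriction of $H$. The key local lemma is: \emph{for every $t_0 \in I$ there is $\varepsilon > 0$ such that $H_t \prec H_{t_0}$ for all $t \in I$ with $|t - t_0| < \varepsilon$.} Indeed, continuity of $H$ at $(x,t_0)$ yields a basic open box $A_x \times J_x$ about $(x,t_0)$ mapped into $U_{H(x,t_0)}$, whence $H_t(x) \preceq H_{t_0}(x)$ for $t \in J_x$; intersecting the finitely many $J_x$ — and this is the one point where finiteness of $X$ is used — produces the common $\varepsilon$. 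Now I would cover the compact interval $I$ by these neighbourhoods, take a Lebesgue number $\lambda$, and choose a subdivision $0 = r_0 < r_1 < \cdots < r_m = 1$ of mesh less than $\lambda$, so that each $[r_j, r_{j+1}]$ sits inside the neighbourhood of some $t_0^{(j)}$. Then $H_{r_j} \prec H_{t_0^{(j)}} \succ H_{r_{j+1}}$ for every $j$, and stringing these together gives the fence
\[
f = H_{r_0} \prec H_{t_0^{(0)}} \succ H_{r_1} \prec H_{t_0^{(1)}} \succ \cdots \succ H_{r_m} = g,
\]
which, after inserting trivial repetitions if one insists on strict alternation starting with $\prec$, is of the required shape.

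Equivalently, the local lemma says precisely that $t \mapsto H_t$ is a continuous map from the connected space $I$ to the finite $T_0$-space $Y^X$ with the pointwise order, so $f$ and $g$ lie in a single connected component of $Y^X$; and in any finite poset two elements of the same component are joined by a fence (the fence-equivalence classes being clopen), which again gives the conclusion. I expect the main obstacle to be the local comparability lemma above: it is the only place where the finiteness of $X$ and the product topology on $X \times I$ genuinely have to be managed, the remainder being routine bookkeeping with fences and a standard compactness argument.
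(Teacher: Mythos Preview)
The paper does not actually prove this statement: it is quoted as a background result attributed to Stong and left without proof, so there is no in-paper argument to compare against. Your proof is correct and is essentially the classical one due to Stong; both directions are handled cleanly, and the local comparability lemma together with the Lebesgue-number/compactness step is exactly the standard route to extracting a finite fence from a homotopy.
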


\begin{thm}[McCord]
Let $f,g: X \to Y$ be two homotopic maps between finite $T_0$-spaces.
Then, two maps $|\mathcal{K}(f)|, |\mathcal{K}(g)| : |\mathcal{K}(X)| \to |\mathcal{K}(Y)|$ are homotopic.
\end{thm}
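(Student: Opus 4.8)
The plan is to reduce to a single comparability by Stong's theorem and then realize the resulting order-preserving homotopy through the prism construction. By Stong's theorem there is a sequence $f = f_0 \prec f_1 \succ f_2 \prec \cdots f_n = g$ of pointwise comparable maps; since homotopy of maps between topological spaces is an equivalence relation, it suffices to treat two maps $f, g : X \to Y$ that are pointwise comparable, say $f(x) \le g(x)$ for every $x \in X$, and to show $|\mathcal{K}(f)| \simeq |\mathcal{K}(g)|$. Chaining such homotopies along the zigzag then yields the general case.

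Next I would package the comparability as a single order-preserving map. Write $I_2 = \{0 \prec 1\}$ for the two-point chain and define $H : X \times I_2 \to Y$ by $H(x,0) = f(x)$ and $H(x,1) = g(x)$, where $X \times I_2$ carries the product order. The only case needing a check for order-preservation is $(x,0) \prec (x',1)$ with $x \prec x'$: here $f(x) \le f(x') \le g(x')$, using that $f$ is order-preserving and $f \le g$ pointwise. Hence $H$ is order-preserving, i.e. continuous, and $H \circ j_0 = f$, $H \circ j_1 = g$ for the end inclusions $j_\epsilon : X \to X \times I_2$, $x \mapsto (x, \epsilon)$.

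The geometric heart is the identification $|\mathcal{K}(X \times I_2)| \homeo |\mathcal{K}(X)| \times [0,1]$. Concretely, the vertices of $\mathcal{K}(X \times I_2)$ are the pairs $(x, \epsilon)$, and sending $(x, \epsilon)$ to the point $(x, \epsilon)$ of $|\mathcal{K}(X)| \times \{\epsilon\}$ and extending affinely over each simplex gives a homeomorphism $\phi$; this is exactly the standard triangulation of a prism obtained from a linear order on the vertices of each end, and one verifies that $\phi$ restricts to $|\mathcal{K}(j_\epsilon)|$ on $|\mathcal{K}(X)| \times \{\epsilon\}$. Then $G := |\mathcal{K}(H)| \circ \phi : |\mathcal{K}(X)| \times [0,1] \to |\mathcal{K}(Y)|$ is continuous, and by functoriality of $\mathcal{K}$ and $|\cdot|$ we get $G(-, 0) = |\mathcal{K}(H \circ j_0)| = |\mathcal{K}(f)|$ and $G(-,1) = |\mathcal{K}(g)|$, so $|\mathcal{K}(f)| \simeq |\mathcal{K}(g)|$.

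I expect the main obstacle to be this prism identification $|\mathcal{K}(X \times I_2)| \homeo |\mathcal{K}(X)| \times [0,1]$ together with its compatibility with the end inclusions $j_\epsilon$. It is tempting to argue instead that $\mathcal{K}(f)$ and $\mathcal{K}(g)$ are contiguous simplicial maps, but this fails: for a chain $x \prec x'$ in $X$ the set $\{f(x), f(x'), g(x), g(x')\}$ need not be a chain in $Y$ (for instance when its image is a ``diamond''), so no single simplex of $\mathcal{K}(Y)$ need contain $\mathcal{K}(f)(\sigma) \cup \mathcal{K}(g)(\sigma)$. This is precisely why the homotopy must be carried by the finer simplicial structure of $\mathcal{K}(X \times I_2)$ rather than by contiguity.
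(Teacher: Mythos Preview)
The paper does not supply its own proof of this statement: it is recorded as a known theorem attributed to McCord and left without argument. So there is nothing in the paper to compare your proof against.

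That said, your argument is correct and is essentially the standard one. The reduction via Stong's zigzag to a single pointwise comparison $f \preceq g$ is exactly right, and packaging this as an order-preserving map $H : X \times I_2 \to Y$ is the natural move. The only nontrivial ingredient, as you correctly flag, is the identification $|\mathcal{K}(X \times I_2)| \homeo |\mathcal{K}(X)| \times [0,1]$ compatible with the end inclusions. This is a special case of the well-known fact that for finite posets $P, Q$ one has $|\mathcal{K}(P \times Q)| \homeo |\mathcal{K}(P)| \times |\mathcal{K}(Q)|$; for $Q = I_2$ it is precisely the classical prism triangulation, where a maximal chain in $\sigma \times I_2$ for a totally ordered $\sigma = \{v_0 \prec \cdots \prec v_n\}$ has the form $(v_0,0) \prec \cdots \prec (v_i,0) \prec (v_i,1) \prec \cdots \prec (v_n,1)$. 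Your affine vertex map $(x,\epsilon) \mapsto (x,\epsilon)$ is exactly the map realizing this triangulation, so the homeomorphism and the compatibility $\phi \circ |\mathcal{K}(j_\epsilon)| = \iota_\epsilon$ with the slice inclusions $\iota_\epsilon : |\mathcal{K}(X)| \hookrightarrow |\mathcal{K}(X)| \times \{\epsilon\}$ hold.

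Your remark that contiguity of $\mathcal{K}(f)$ and $\mathcal{K}(g)$ can fail is also correct and worth keeping: with $Y$ the four-element ``diamond'' $a \prec b,c \prec d$, $X = \{0 \prec 1\}$, $f = (a,b)$, $g = (c,d)$, one has $f \preceq g$ but $\{a,b,c,d\}$ is not a chain in $Y$. This justifies going through $\mathcal{K}(X \times I_2)$ rather than invoking contiguity directly.
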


Thirdly, let $X$ be a finite $T_{0}$-space which is naturally a poset.
Let us denote by
$$
[a,b]_{X}=\{x \in X \vert a \prec x \prec b\},\quad [a,-]_{X}=\{x \in X \vert a \prec x\}\ \text{and} \ [-,b]_{X}=\{x \in X \vert x \prec b\}.
$$
We abbreviate as $[a,b] = [a,b]_{X}$, $[a,-] = [a,-]_{X}$ and $[-,b] = [-,b]_{X}$, if there are no confution.

Following \cite{MR3024764}, we say that a point $x \in X$ is an up beat point if $[x,-] \smallsetminus \{x\}$ has a minimum element, and dually that a point $x \in X$ is a down beat point if $[-,x] \smallsetminus \{x\}$ has a maximum element.
An up or down beat point is said to simply be a beat point.

Note that, if $x$ is a beat point of $X$, then $X \smallsetminus \{x\}$ is a strong deformation retract of $X$.
\begin{dfn}
A finite $T_{0}$-space is said to be minimal, if it has no beat point.
A core of a finite space $X$ is a strong deformation retract of $X$, and is a minimal finite $T_{0}$-space.
\end{dfn}

Stong showed the following result on finite spaces.

\begin{thm}
A homotopy equivalence between minimal finite spaces is a homeomorphism.
In particular the core of a finite space is unique up to homeomorphism, and two finite spaces are homotopy equivalent if and only if they have homeomorphic cores.
\end{thm}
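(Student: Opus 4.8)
The three assertions are nested: once we know that a homotopy equivalence between minimal finite spaces is a homeomorphism, uniqueness of the core and the homotopy classification follow formally, so the plan is to put all the weight on the first statement. I would reduce it, via Stong's fence criterion for homotopy quoted above, to a single lemma about self-maps of a minimal finite $T_0$-space.

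The lemma is: if $X$ is minimal and $f\colon X\to X$ is a map (equivalently, an order-preserving self-map) with $f(x)\preceq x$ for every $x\in X$, then $f=\mathrm{id}_X$; dually, if $x\preceq f(x)$ for all $x$, then $f=\mathrm{id}_X$. This is the step I expect to be the main obstacle, although it is short. To prove it, assume $A=\{x\in X : f(x)\neq x\}$ is non-empty and choose $x\in A$ minimal for the order. Then $f(x)\prec x$, and for any $y$ with $y\prec x$ the minimality of $x$ in $A$ forces $f(y)=y$, while order-preservation gives $y=f(y)\preceq f(x)$. Hence every element of $[-,x]\smallsetminus\{x\}$ is $\preceq f(x)\in[-,x]\smallsetminus\{x\}$, so $f(x)$ is the maximum of that set and $x$ is a down beat point, contradicting minimality of $X$. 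The dual statement is symmetric, using up beat points instead.

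Granting the lemma, I would first show that any $h\colon X\to X$ with $h\simeq\mathrm{id}_X$ equals $\mathrm{id}_X$: by Stong's theorem there is a fence $\mathrm{id}_X=h_0,h_1,\dots,h_n=h$ with consecutive maps comparable in the pointwise order; applying the lemma to $h_1$ (in whichever of the two directions the comparison $h_0$--$h_1$ runs) gives $h_1=\mathrm{id}_X$, and then inductively $h_i=\mathrm{id}_X$ for every $i$, so $h=\mathrm{id}_X$. Now let $f\colon X\to Y$ be a homotopy equivalence between minimal finite spaces with homotopy inverse $g$; then $gf\simeq\mathrm{id}_X$ and $fg\simeq\mathrm{id}_Y$, hence $gf=\mathrm{id}_X$ and $fg=\mathrm{id}_Y$, so $f$ is a bijection with continuous inverse $g$, i.e. a homeomorphism.

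For the remaining two claims: any two cores of a finite space $X$ are minimal finite spaces each homotopy equivalent to $X$, hence homotopy equivalent to each other, hence homeomorphic by the first part — this gives uniqueness of the core up to homeomorphism. Finally, for finite spaces $X,Y$ with cores $X_c,Y_c$, we have $X\simeq Y$ if and only if $X_c\simeq Y_c$ (because $X\simeq X_c$ and $Y\simeq Y_c$), and by the first part the latter is equivalent to $X_c\homeo Y_c$; this is the asserted classification. The only genuinely non-formal ingredient is the beat-point argument in the lemma, everything else being bookkeeping on top of the already-quoted results of Stong and McCord together with the fact, noted above, that a beat point may always be deleted.
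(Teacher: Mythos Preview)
Your argument is correct and is essentially Stong's original proof: the key lemma that a self-map $f$ of a minimal finite space with $f\preceq\mathrm{id}$ (or $\mathrm{id}\preceq f$) must equal $\mathrm{id}$, proved by locating a beat point at a minimal element of $\{x:f(x)\neq x\}$, is exactly the heart of Stong's paper, and your deduction of the three assertions from it via the fence criterion is the standard one.

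There is nothing to compare against here, however: the paper does not give its own proof of this theorem. It is stated as a result of Stong, without proof, and used as background. So your proposal is not an alternative to the paper's argument but rather a supplied proof of a quoted fact. If your intention is to include a proof where the paper omits one, what you have written is fine and self-contained (modulo the already-quoted fence theorem).
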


If $x$ is a beat point, we say that there is an {\it elementary strong collapse} from $X$ to $X \smallsetminus \{x\}$ and write $X \eStrColl X\smallsetminus\{x\}$ or that there is an elementary strong expansion from $X\smallsetminus\{x\}$ to $X$ and write $X\smallsetminus\{x\} \eStrExp X$.
A strong collapse $X \StrColl Y$ or a strong expansion $Y \StrExp X$ is a sequence of elementary collapses starting in $X$ and ending in $Y$.
By definition, we have the following theorem.
\begin{thm}
Let $A \subset X$. Then $X \StrColl A$ if and only if $A$ is a strong deformation retract of $X$.
\end{thm}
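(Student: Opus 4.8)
The two directions are of quite different character, and I would treat them separately. For the forward implication, a strong collapse is by definition a chain $X = X_{0} \eStrColl X_{1} \eStrColl \cdots \eStrColl X_{k} = A$ in which each $X_{j} \eStrColl X_{j+1}$ deletes a single beat point, and the remark preceding the theorem already gives that $X_{j+1}$ is then a strong deformation retract of $X_{j}$. So it is enough to observe that being a strong deformation retract is transitive: given a deformation of $X$ onto $Y$ rel $Y$ and one of $Y$ onto $Z$ rel $Z$, concatenate them after first composing the $X$--deformation with the retraction $X \to Y$, which keeps it rel $Z$ and makes it land in $Y$. Iterating along the chain yields that $A$ is a strong deformation retract of $X$.

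For the converse I would induct on $|X| - |A|$, the case $|X| = |A|$ being trivial. The inductive step is the claim that if $A \subsetneq X$ is a strong deformation retract of $X$, then $X$ has a beat point $x \notin A$ such that $A$ is still a strong deformation retract of $X \smallsetminus \{x\}$; granting this, $X \eStrColl X \smallsetminus \{x\} \StrColl A$ by the inductive hypothesis. To produce the beat point, fix a strong deformation retraction $H : X \times I \to X$, so $H_{0} = \mathrm{id}_{X}$, $H_{1} = i \circ r$ for a retraction $r : X \to A$, and $H_{t}$ fixes $A$ for every $t$. Running the usual proof of Stong's theorem on $H$ gives maps $\mathrm{id}_{X} = g_{0}, g_{1}, \dots, g_{m} = i \circ r$ with consecutive maps comparable in the pointwise order; and because the $g_{j}$ are the slices $z \mapsto H(z,t_{j})$ of $H$, each $g_{j}$ restricts to the identity on $A$. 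Let $j$ be least with $g_{j} \ne \mathrm{id}_{X}$, which exists since $i \circ r$ is not surjective. Then $g_{j}$ is comparable with $g_{j-1} = \mathrm{id}_{X}$; say $g_{j}$ lies pointwise above $\mathrm{id}_{X}$ (the opposite case is order-dual). Choosing $x$ maximal among the points lying strictly below their $g_{j}$--image --- a nonempty set --- a short monotonicity argument shows that $g_{j}(x)$ is the minimum of $[x,-] \smallsetminus \{x\}$, so $x$ is an up beat point of $X$; and $x \notin A$, since $g_{j}$ fixes $A$ pointwise.

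It remains to verify that $A$ is a strong deformation retract of $X \smallsetminus \{x\}$. Since $x$ is a beat point, $X \smallsetminus \{x\}$ is itself a strong deformation retract of $X$, so this is an instance of the general principle that if $B \subseteq C \subseteq D$ with $B$ and $C$ both strong deformation retracts of $D$, then $B$ is a strong deformation retract of $C$ --- proved by transporting a rel-$B$ deformation of $D$ onto $C$ along the retraction $D \to C$. The point I expect to need the most care --- and the real obstacle --- is the relative form of Stong's theorem invoked above: the statement as quoted produces a fence of maps but asserts nothing about their behaviour on $A$, so one must reinspect its proof (replacing $X \times I$ by $X$ times a finite fence and restricting to $A \times I$, where $H$ is constant) to guarantee that the fence can be chosen to fix $A$ pointwise. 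The remaining verifications are routine finite-space manipulations.
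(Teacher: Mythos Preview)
The paper does not actually prove this theorem: it is stated immediately after the definition of strong collapse with the one-line justification ``By definition, we have the following theorem'', and no argument is given. In effect the paper is quoting a known result (cf.\ Barmak \cite{MR3024764}) rather than proving it.

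Your proposal, by contrast, supplies a genuine proof, and the argument is correct. The forward direction is indeed routine (transitivity of strong deformation retracts plus the remark that deleting a beat point is a strong deformation retraction). For the converse, your induction on $|X|-|A|$ with the key step ``find a beat point outside $A$'' is exactly the standard line of attack, and your extraction of such a beat point from the first nontrivial map in a Stong fence comparable to $\mathrm{id}_X$ is the right idea: maximality of $x$ among points moved strictly up by $g_j$, together with monotonicity of $g_j$, forces $g_j(x)$ to sit below every $y\succ x$, so $x$ is an up beat point, and $g_j|_A=\mathrm{id}_A$ keeps $x\notin A$. The closing lemma ($B\subseteq C\subseteq D$ with $B,C$ both SDRs of $D$ implies $B$ an SDR of $C$) is also fine, proved just as you indicate by post-composing with the retraction onto $C$.

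You are right to flag the relative form of Stong's theorem as the one place needing care. It does go through: the usual proof factors $H$ through a continuous path $t\mapsto H_t$ in the finite poset $Y^X$, and the fence is obtained from values $H_{t_i}$ of this path; since every $H_t$ fixes $A$ pointwise, so does every map in the fence. With that observation your argument is complete, and it fills in what the paper leaves unsaid.
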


For an open cover $\mathcal{U}$ of a finite space $X$, we say that $\mathcal{U}$ is {\it principal} if for each open set $U$ in $\mathcal{U}$, there exists some maximal elements $m_0, m_1, \ldots, m_l$ in $X$ such that 
$$
U=\bigcup_{i} [-,m_i]_{X}.
$$

\begin{thm}
Let $X$ be a finite space of $\tc(X)=k$.
Then there exists a principal open covering of $\double{X}$ consisting of $k+1$ section-categorical open subsets.
\end{thm}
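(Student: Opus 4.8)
The plan is to prove something slightly stronger than asked: \emph{every} section-categorical open cover of $\double{X}$ of size $k+1$ can be replaced — by shrinking each member — by a principal one of the same size. The single structural fact I would use is the standard description of a finite $T_{0}$-space $Y$: its open sets are precisely the down-sets, so $[-,y]_{Y}$ is the minimal open set containing $y$, and hence $Y=\bigcup_{m}[-,m]_{Y}$ where $m$ ranges over the maximal elements of $Y$ (in a finite poset every element lies below a maximal one).

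Concretely, since $\tc(X)=k$ I would fix an open cover $\double{X}=U_{0}\cup\cdots\cup U_{k}$ together with continuous sections $s_{i}\colon U_{i}\to PX$ of $\pi$, and for each $i$ set
$$
\tilde U_{i}=\bigcup\bigl\{\,[-,m]_{\double{X}} : m \text{ maximal in } \double{X},\ m\in U_{i}\,\bigr\}.
$$
Because $U_{i}$ is a down-set, $m\in U_{i}$ forces $[-,m]_{\double{X}}\subseteq U_{i}$, so $\tilde U_{i}\subseteq U_{i}$; thus $\tilde U_{i}$ is open, it is principal by construction, and $s_{i}|_{\tilde U_{i}}$ is a section of $\pi$ over $\tilde U_{i}$, so $\tilde U_{i}$ is section-categorical (section-categoricity is inherited by arbitrary subsets, since the restriction of a section is a section). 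These sets still cover $\double{X}$: each maximal element $m$ of $\double{X}$ lies in some $U_{i}$, whence $[-,m]_{\double{X}}\subseteq\tilde U_{i}$, and every point of $\double{X}$ lies in $[-,m]_{\double{X}}$ for some maximal $m$, so $\bigcup_{i}\tilde U_{i}=\bigcup_{m}[-,m]_{\double{X}}=\double{X}$.

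I do not expect a genuine obstacle: the key point that makes this work is that one \emph{shrinks} rather than enlarges the given open sets, which is exactly what a finite space allows, since an open set already contains the minimal open neighbourhood $[-,m]_{\double{X}}$ of every maximal point $m$ it meets. The only bookkeeping to be careful about is keeping the cardinality equal to $k+1$: a set $\tilde U_{i}$ can be empty, precisely when $U_{i}$ contains no maximal element of $\double{X}$, and in that case I would simply replace it by $[-,m]_{\double{X}}$ for an arbitrary maximal $m\in\double{X}$, which is principal and, being contained in whichever cover member contains $m$, still section-categorical. This yields a principal, section-categorical open cover of $\double{X}$ with exactly $k+1$ members.
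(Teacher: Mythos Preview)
Your proof is correct and essentially identical to the paper's: your $\tilde U_{i}$ is exactly the paper's construction $(U_{i})_{M}:=\bigcup_{m\in M(\double{X})\cap U_{i}}[-,m]$, and the argument that this principal refinement still covers and remains section-categorical is the same (indeed, you spell out the section-categoricity step more carefully than the paper does). Your extra bookkeeping about possibly empty $\tilde U_{i}$ is in fact unnecessary---if some $\tilde U_{i}$ were empty then the remaining $\tilde U_{j}\subseteq U_{j}$ would already cover $\double{X}$, contradicting $\tc(X)=k$---but it does no harm.
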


\begin{proof}
For a family of subsets $\mathcal{V}$ of a finite topological space $Y$, we define 
$$
\mathcal{V}_{M} = \{V_{M} \mid V \in \mathcal{V}\},
$$
where $V_{M} = \bigcup_{m \in M(Y) \cap V} [-,m]$.
We remark that $V_{M}$ is a subset of $V$ if $V$ is open.
Then, for any open covering $\mathcal{O}$ of $Y$, $\mathcal{O}_{M}$ is also an open covering of $Y$, and is a subdivision of $\mathcal{O}$.
In our case, we have an open covering $\mathcal{V}=\{V_0,V_1\ldots , V_k\}$ of $\double{X}$, since $\tc(X)=k$.
Thus $\mathcal{U}=\mathcal{V}_M$ gives our desired open covering of $\double{X}$ for a finite space $X$.
\end{proof}

\def\connect(#1,#2)(#3,#4){\TorusCol=#1\TorusRow=#2
\multiply\TorusCol by \number\BaseUnit
\multiply\TorusRow by \number\BaseUnit
\put(\number\TorusCol,\number\TorusRow){\line(#3,#4){\number\BaseUnit}}}
\def\multiconnect(#1,#2)(#3,#4)#5(#6,#7){
\TorusCol=#1\TorusRow=#2
\multiply\TorusCol by \number\BaseUnit
\multiply\TorusRow by \number\BaseUnit
\VectorCol=#3\VectorRow=#4
\multiply\VectorCol by \number\BaseUnit
\multiply\VectorRow by \number\BaseUnit
\multiput(\number\TorusCol,\number\TorusRow)(\VectorCol,\VectorRow){#5}{\line(#6,#7){\number\BaseUnit}}}
\def\hpoints#1{
\put(\number\TorusCol,-15){\makebox(0,0)[cc]{#1}}
\advance\TorusCol 30}
\def\vpoints#1{
\put(-15,\number\TorusRow){\makebox(0,0)[cc]{#1}}
\advance\TorusRow 30}
\def\lpoints(#1,#2)#3{\TorusCol=#1\TorusRow=#2
\multiply\TorusCol by \number\BaseUnit
\multiply\TorusRow by \number\BaseUnit
\put(\number\TorusCol,\number\TorusRow){
\makebox(0,0)[cc]{\includegraphics[width=4mm]{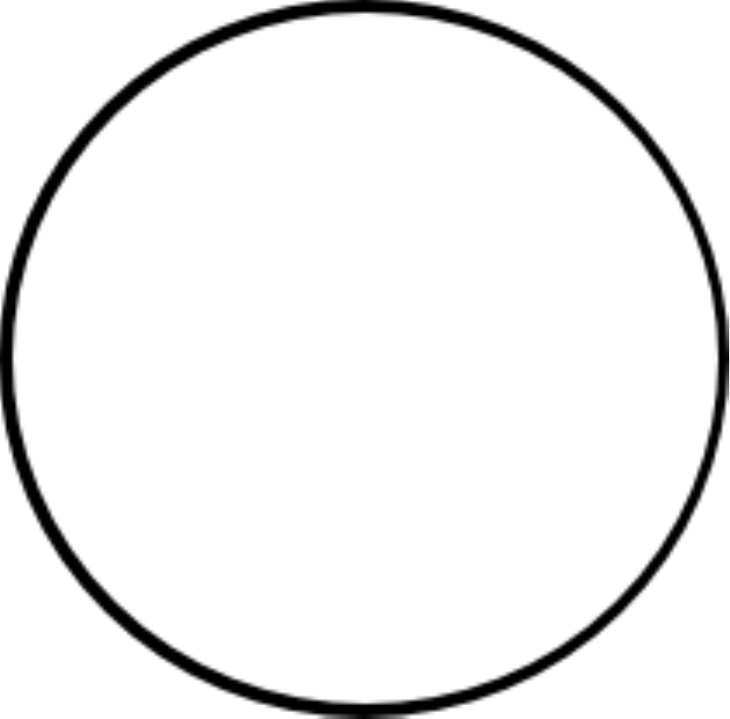}}}
\put(\number\TorusCol,\number\TorusRow){
\makebox(0,0)[cc]{\small\ifx1#3\color{green}\else\ifx2#3\color{blue}\else\ifx3#3\color{red}\else\color{black}\fi\fi\fi#3}}}

\def\multilpoints(#1,#2)(#3,#4)#5#6{
\TorusCol=#1\TorusRow=#2
\multiply\TorusCol by \number\BaseUnit
\multiply\TorusRow by \number\BaseUnit
\VectorCol=#3\VectorRow=#4
\multiply\VectorCol by \number\BaseUnit
\multiply\VectorRow by \number\BaseUnit
\multiput(\number\TorusCol,\number\TorusRow)(\VectorCol,\VectorRow){#5}{\makebox(0,0)[cc]{\includegraphics[width=4mm]{circle}}}
\multiput(\number\TorusCol,\number\TorusRow)(\VectorCol,\VectorRow){#5}{\makebox(0,0.1)[cc]{\small\ifx0#6\color{green}\else\ifx1#6\color{blue}\else\ifx2#6\color{red}\else\color{black}\fi\fi\fi#6}}}
\newenvironment{Torus}[2]
{\newcount\BaseUnit\BaseUnit=30
\newcount\IndexSizeX\IndexSizeX=#1
\newcount\IndexSizeY\IndexSizeY=#2
\newcount\LineSizeX
\newcount\LineSizeY
\newcount\TorusSizeX
\newcount\TorusSizeY
\LineSizeX=\number\BaseUnit
\LineSizeY=\number\BaseUnit
\multiply\LineSizeX by \number\IndexSizeX
\multiply\LineSizeY by \number\IndexSizeY
\TorusSizeX=\number\LineSizeX
\TorusSizeY=\number\LineSizeY
\advance\TorusSizeX by 20
\advance\TorusSizeY by 20
\advance\IndexSizeX by 1
\advance\IndexSizeY by 1
\newcount\TorusRow\newcount\TorusCol
\TorusRow=0\TorusCol=0
\newcount\VectorRow\newcount\VectorCol
\VectorRow=0\VectorCol=0
\begin{picture}(\number\TorusSizeX,\number\TorusSizeY)(-20,-20)
\curvedashes[.4mm]{1,1,1}
\multiput(0,0.1)(0,\number\BaseUnit){\number\IndexSizeY}	{\curve(0,0 , \number\LineSizeX,0)}
\multiput(0,0)(\number\BaseUnit,0){\number\IndexSizeX}	{\curve(0,0 , 0,\number\LineSizeY)}
\linethickness{1pt}
}{\end{picture}}

\begin{figure}
$$\begin{Torus}{6}{6} 
\hpoints{$a_{0}$}
\hpoints{$b_{0}$}
\hpoints{$a_{1}$}
\hpoints{$b_{1}$}
\hpoints{$a_{2}$}
\hpoints{$b_{2}$}
\hpoints{$a_{0}$}
\vpoints{$a_{0}$}
\vpoints{$b_{0}$}
\vpoints{$a_{1}$}
\vpoints{$b_{1}$}
\vpoints{$a_{2}$}
\vpoints{$b_{2}$}
\vpoints{$a_{0}$}
\newcount\inti 
\inti=1 \loop\ifnum\inti<7 
\multiconnect(1,\number\inti)(2,0){3}(1,0)
\multiconnect(1,\number\inti)(2,0){3}(1,1)
\multiconnect(1,\number\inti)(2,0){3}(0,1)
\multiconnect(1,\number\inti)(2,0){3}(-1,1)
\multiconnect(1,\number\inti)(2,0){3}(-1,0)
\multiconnect(1,\number\inti)(2,0){3}(-1,-1)
\multiconnect(1,\number\inti)(2,0){3}(0,-1)
\multiconnect(1,\number\inti)(2,0){3}(1,-1)
\advance\inti by 2\repeat
\inti=0 \loop\ifnum\inti<8 
\multiconnect(0,\number\inti)(1,0){6}(1,0)
\multiconnect(\number\inti,0)(0,1){6}(0,1)
\advance\inti by 2\repeat

\inti=1 \loop\ifnum\inti<7 
\multilpoints(0,\number\inti)(2,0){4}{1}
\multilpoints(1,\number\inti)(2,0){3}{2}
\advance\inti by 2\repeat
\inti=0 \loop\ifnum\inti<8 
\multilpoints(0,\number\inti)(2,0){4}{0}
\multilpoints(1,\number\inti)(2,0){3}{1}
\advance\inti by 2\repeat
\end{Torus}$$
\caption{The poset struture of $\double{\Cir^1_k}$}
\end{figure}

\section{The proof of $\tc(\Cir^1_k)=1, \,k \geq 5$}

The following theorem is the key to prove that $\tc(\Cir^1_k)=1, \,k \geq 5$, and shall be proved in the appendix.

\begin{thm}\label{cir map}
For given two distinct maps $f, \,g : \Cir^1_m \to \Cir^1_n$, $f$ and $g$ are homotopic if and only if $\deg f = \deg g =d $ for some $d$ with $|d| < m/n$.
\end{thm}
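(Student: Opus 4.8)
The plan is to analyze maps $\Cir^1_m \to \Cir^1_n$ through the combinatorial structure of order-preserving maps between the associated posets, together with the degree of the induced map on $S^1 = |\mathcal{K}(\cdot)|$. First I would set up notation: a continuous map between finite $T_0$-spaces is the same as an order-preserving map of posets, and by McCord's theorem such a map induces $|\mathcal{K}(f)| : S^1 \to S^1$, whose homotopy class is detected by $\deg f \in \integral$. The forward implication is then immediate from McCord's theorem (homotopic maps induce homotopic realizations, hence equal degrees), so the real content is (a) the constraint $|d| < m/n$ on which degrees are realizable, and (b) the converse, that any two maps of the same realizable degree are homotopic.

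For (a), the key observation is a ``Lipschitz''-type estimate: an order-preserving map $f : \Cir^1_m \to \Cir^1_n$ cannot wind around $\Cir^1_n$ too fast. Concretely, I would track how $f$ moves as one traverses the $2m$ points of $\Cir^1_m$ cyclically; because adjacent points of $\Cir^1_m$ must map to comparable points of $\Cir^1_n$ (order-preservation along the cyclic ``zig-zag''), the image can advance by only a bounded amount per step, and going all the way around once (which forces going around $\Cir^1_n$ exactly $|d|$ times) yields the inequality $|d| \cdot (\text{cost per loop of } \Cir^1_n) \le (\text{length of one loop of } \Cir^1_m)$, i.e. $|d| < m/n$. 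I would make this precise by a counting argument over the minimal-point / maximal-point alternation, being careful that the strict inequality (rather than $\le$) comes from the fact that a degree-$d$ map with $|d| = m/n$ would have to be ``locally constant in the wrong way'' or collapse an edge, contradicting being genuinely of that degree — this boundary case is where I expect to have to be most careful.

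For the converse (b), I would fix a degree $d$ with $|d| < m/n$ and exhibit, for any two maps $f, g$ of degree $d$, a zig-zag of the form $f = f_0 \prec f_1 \succ \cdots \succ f_n = g$ as in Stong's theorem. The natural strategy is to first reduce every degree-$d$ map to a single canonical ``standard'' degree-$d$ map $\omega_d$ by a sequence of $\prec$/$\succ$ moves: given $f$, I would locate a point where $f$ is not yet in standard form and push its value up or down to a comparable value, using the slack provided by the strict inequality $|d| < m/n$ to guarantee there is always room to do so without changing the degree (monotone homotopies of posets don't change the induced homotopy class, so the degree is preserved throughout). Iterating, $f \simeq \omega_d \simeq g$.

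The main obstacle will be step (b), and specifically the bookkeeping needed to show that the standardization moves can always be performed: one must verify at each stage that the modified map is still order-preserving, that its degree is unchanged, and that a well-chosen potential function (e.g. total ``distance'' from the standard map) strictly decreases so the process terminates. The strict inequality $|d| < m/n$ is exactly the hypothesis that provides the maneuvering room, so the argument must use it essentially; getting the edge case $|d|=m/n$ to genuinely fail (no map of that degree, or such a map is not homotopic to the standard one of a smaller degree) is the delicate point. Since the theorem is deferred to the appendix, I would present this as the skeleton and carry out the detailed case analysis there.
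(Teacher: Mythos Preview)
Your overall architecture for (b) matches the paper's: both reduce every map of a given degree to a single standard representative via Stong-type zig-zags. The paper, however, executes this by first lifting along the covering $q : \integral \to \integral/2n \cong \Cir^1_n$ from the Khalimsky \emph{line}: a map $\Cir^1_m \to \Cir^1_n$ is pulled back to a path $[0,2m] \to \integral$, which---being a map into a contractible Alexandroff space---is homotoped rel endpoints to an explicit standard path $h_{\tilde f}$, and then pushed back down. Working upstairs in $\integral$ removes all the wrap-around bookkeeping that your potential-function scheme would have to handle directly on the circle; this lifting is what keeps the argument short. The hypothesis $|d|<m/n$ is used exactly once, to ensure the standard lift is eventually constant near its right endpoint, which creates the slack needed to slide the basepoint value by one.

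There is a genuine gap in your treatment of (a). You frame the strict inequality $|d|<m/n$ as a constraint on which degrees are \emph{realizable}, to be extracted from a Lipschitz count. But that count only yields $|d|\le m/n$, and the bound is attained whenever $n\mid m$: already for $m=n$ the identity has degree $1=m/n$. What the forward direction actually asserts is that two \emph{distinct} homotopic maps must satisfy $|d|<m/n$; equivalently, any map with $|d|=m/n$ is homotopically rigid. Your proposed reason (``a degree-$d$ map with $|d|=m/n$ would have to collapse an edge, contradicting being genuinely of that degree'') argues for nonexistence rather than rigidity, and is false. The correct observation is that a map of degree $\pm m/n$ lifts to a strictly monotone path on $[0,2m]$, so it admits no comparable neighbour in the function poset---hence no nontrivial Stong zig-zag can start there.
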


This immediately implies the following theorem.

\begin{thm}
 $\tc(\Cir^1_k)=1, \,k \geq 5.$
\end{thm}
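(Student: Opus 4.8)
The lower bound is immediate: since $\Cir^1_k$ is weakly homotopy equivalent to $S^1$ it is not contractible, so $\tc(\Cir^1_k)\ge\cat(\Cir^1_k)\ge 1$; it therefore suffices to prove $\tc(\Cir^1_k)\le 1$, that is, to cover $\double{\Cir^1_k}$ by two section-categorical open sets $U_0,U_1$. For this it is enough to make the two projections $p_1,p_2:\double{\Cir^1_k}\to\Cir^1_k$ homotopic after restriction to each $U_j$: a homotopy $H:U_j\times I\to\Cir^1_k$ from $p_1|_{U_j}$ to $p_2|_{U_j}$ determines, via the tube lemma, a continuous section $U_j\to P\Cir^1_k$ of $\pi$, so $U_j$ is section-categorical. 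Thus the whole task is to exhibit, for every $k\ge5$, an open cover $\{U_0,U_1\}$ of $\double{\Cir^1_k}$ with $p_1|_{U_j}\simeq p_2|_{U_j}$ for $j=0,1$.

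The plan is to imitate the standard two-chart motion planner on $S^1$ by cutting the ``torus'' $\double{\Cir^1_k}$ along two parallel essential circles. First consider the diagonal $\Delta=\{(x,x):x\in\Cir^1_k\}$, a subposet isomorphic to $\Cir^1_k$ on which $p_1$ and $p_2$ literally \emph{coincide}. I would take $U_0$ to be an explicit open neighbourhood of $\Delta$ in $\double{\Cir^1_k}$ and check, by deleting up and down beat points one at a time, that $U_0\StrColl\Delta$; then, writing $r:U_0\to\Delta$ for the resulting retraction, $p_1|_{U_0}\simeq p_1|_{\Delta}\circ r=p_2|_{\Delta}\circ r\simeq p_2|_{U_0}$, so $U_0$ is section-categorical, with no appeal to Theorem~\ref{cir map}. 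For $U_1$ I would choose an explicit open set with $U_0\cup U_1=\double{\Cir^1_k}$ whose core is a circle subposet $C\cong\Cir^1_m$ winding once around $\double{\Cir^1_k}$ and carrying the same homology class $(1,1)$ as $\Delta$; since $m>k$ forces $C\ne\Delta$, the restrictions $p_1|_C$ and $p_2|_C$ are then \emph{distinct} maps $\Cir^1_m\to\Cir^1_k$, both of degree $1$.

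The crux is to carry out this second step with $m>k$. This is exactly what is impossible for $k=3,4$ — consistently with Tanaka's values $\tc(\Cir^1_3)=\tc(\Cir^1_4)=2$ — because there $\double{\Cir^1_k}$ is too small to contain such a long off-diagonal circle and still be covered by $U_0$ together with a neighbourhood of $C$; it first becomes possible for $k\ge5$, and this is the only place the hypothesis $k\ge5$ is used. Granting $m>k$, Theorem~\ref{cir map} applies to the distinct maps $p_1|_C,p_2|_C$ with $|d|=1<m/k$ and gives $p_1|_C\simeq p_2|_C$, whence $p_1|_{U_1}\simeq p_2|_{U_1}$ through the strong collapse $U_1\StrColl C$. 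Hence $\{U_0,U_1\}$ is an open cover of $\double{\Cir^1_k}$ by section-categorical sets and $\tc(\Cir^1_k)=1$. The main obstacle is precisely this last combinatorial input: the two strong collapses are routine beat-point bookkeeping, but producing, uniformly in $k\ge5$, a pair $U_0,U_1$ realising all of the above properties is the real content of the proof.
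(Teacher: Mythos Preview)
Your high-level strategy coincides with the paper's: cover $\double{\Cir^1_k}$ by two open sets, strongly collapse each to a Khalimsky circle, and then invoke Theorem~\ref{cir map} on the core to get $\pi_1\simeq\pi_2$. But you explicitly stop before the construction, and that construction \emph{is} the proof. The paper's argument is almost entirely devoted to what you call ``routine beat-point bookkeeping'': it writes $U$ as a union of five explicit rectangles $A_0,\dots,A_4$ in the $2k\times 2k$ grid, and then carries out four successive families of retractions $f_i$, $g_i$, $h_0$, $h_1$ (each checked to be comparable to its neighbour in the pointwise order) to reach a staircase-shaped core $C\cong\Cir^1_{M}$ with $M>2k$; only then does Theorem~\ref{cir map} apply. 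The second open set $V$ is the minimal open neighbourhood of the complement of $U$ and is handled by the same mechanism.

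Two specific points where your sketch diverges from the paper and could get you into trouble. First, the paper does \emph{not} use the diagonal trick for either piece; both $U$ and $V$ are collapsed to long circles and Theorem~\ref{cir map} is applied twice. Your diagonal idea for $U_0$ is legitimate and even slightly cleaner on that side, but it is not what the paper does. Second, and more seriously, the condition $m>k$ that you need for $U_1$ is not automatic. A ``shifted diagonal'' sitting inside the complement of a diagonal tube is isomorphic to $\Cir^1_k$ itself, which gives $m=k$ and $|d|=1\not<k/k$, so Theorem~\ref{cir map} does not fire. The whole point of the paper's peculiar choice of $U$ (the five rectangles and the zig-zag core $C$) is to force the core to be \emph{longer} than $\Cir^1_k$; the inequality $m+2\le 2k-1$ used along the way is exactly where $k\ge5$ enters. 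So the step you flag as ``the real content of the proof'' is not merely bookkeeping: one must design $U_1$ so that its core winds through enough of the grid to have more than $2k$ points, and then verify the collapse stage by stage. Until you produce such a $U_1$ (or adopt the paper's $U$), the argument is incomplete.
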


\begin{proof}
Let $k \ge 5$.  It suffices to show that $\TC(\Cir^1_k) \leq 1$, since we know $1=\tc(|\Cir^1_k|) \leq \tc(\Cir^1_k)$.  From now on, let us denote the elements $a_{i}$ and $b_{i}$ in $\Cir^1_k$ by just integers $2i{+}1$ and $2i{+}2$, respectively.
Firstly, we introduce an open set $U \subset \double{\Cir^1_k}$.
$$
U = A_0 \cup A_1 \cup A_2 \cup A_3 \cup A_4,
$$
where $A_{i}$'s are defined as follows.
\begin{align*}&
A_0 = \{1,2,\ldots,2k{-}1\} {\times} \{1,2,3\}
\\&
A_1 = \{m,m{+}1,m{+}2\} {\times} \{3,4,\ldots,2k{-}1\}
\\&
A_2 = \{m{+}2,m{+}3,\ldots,2k,1\} {\times} \{2k{-}3,2k{-}2,2k{-}1\}
\\&
A_3 = \{1,2,\ldots,m{-}2\} {\times} \{5,6,\ldots, 2k{-}1\}
\\&
A_4 = \{1,2,3\} {\times} \{2k{-}1,2k,1\}
\end{align*}
where $m = \begin{cases}k, & k : \text{odd} \\ k{+}1, & k : \text{even} \end{cases}$.

Let $V$ be the smallest open neighbourhood of $\double{\Cir^1_k} \setminus U$.  We finish the proof by showing that $\pi_1 |_U \simeq \pi_2 |_U$ and $\pi_1 |_V \simeq \pi_2 |_V$.

Secondly, for each non-negative integer $i \leq 2k{-}m{-}3$, we define a continuous map $f_{i} : U \to U$ by
\[f_i(x,y)= 
  \begin{cases}
    (k_i,y), & (x,y) \in A_0 \cap \{(x,y) \mid k_i \leq x\} \subset U,
    \\ (l_i,y), & (x,y) \in A_3 \cap \{(x,y) \mid \max\{3,l_i\} \leq x\} \subset U,
    \\ (x,y), & \text{otherwise.}
  \end{cases}
\]
where $k_i = 2k{-}1{-}i$, $l_i = m{-}2{-}i$ so that $(k_i, y) \in A_0 \subset U$ if $(x,y) \in A_0$ and $(l_i, y) \in A_3 \subset U$ if $(x,y) \in A_3$.
Since $A_0$ and $A_3$ are disjoint, so $f_i$ is well-defined.

Then we can easily see that $f_i \prec f_{i{+}1}$ or $f_i \succ f_{i{+}1}$, and hence $f_i \simeq f_{i{+}1}$.
Thus we have $1_{U}=f_0 \simeq f_{2k{-}m{-}3}$ and hence we obtain that $U_1 = f_{2k{-}m{-}3}(U)$ is a deformation retract of $U$.
Since $k \ge 5$, we have $m{+}2 \le k{+}3 \le 2k{-}1$ and $3 \le k \le m$, and we can easily see the following equation.
\[C_1 = A_0' \cup A_1 \cup A_2 \cup A_3' \cup A_4 \cup A_5\]
where $A'_{0} \subset A_{0}$ and $A'_{3} \subset A_{3}$ are given as follows.
\begin{align*}&
A_0' = \{1,2,\ldots,m{+}2\}{\times}\{1,2,3\} \subset A_{0}
\\&
A_3' = \{1,2,3\}{\times}\{5,6,\ldots,2k{-}1\} \subset A_{3}.
\end{align*}
Thirdly, for each non-negative integer $0\leq i \leq 2k{-}8$, we define a continuous map $g_i:C_1 \to C_1$ by
\[g_i(x,y)=
  \begin{cases}
    (x,5{+}i), & (x,y) \in A_3' \cap \{(x,y) \mid y\leq 5{+}i\} \subset C_1,
    \\ (x,y), & \text{otherwise.}
  \end{cases}
\]
Since  $(x, 5{+}i) \in A_3' \subset C_1$ provided that $(x,y) \in A_3'$, $g_i$ is well-defined.

Then we can easily see that $g_i \simeq g_{i+1}$.  Thus we have $1_{C_1}=g_0 \simeq g_{2k-8}$, and hence we obtain that $C_2 = g_{2k{-}8}(C_1)$ is a deformation retract of $U_1$.

Since $k \ge 5$, we have $5 \le 2k{-}3$, and we can easily see the following equation.
\[C_2  = A_0' \cup A_1 \cup A_2 \cup A_3'' \cup A_4 \cup A_5 \]
where $A''_{3} \subset A'_{3}$ are given as follows.
\[A_3'' = \{1,2,3\}{\times}\{2k{-}3,2k{-}2,2k{-}1\}\]

Fourthly, we define a continuous map $h_0:C_2 \to C_2$ by 
\[h_0(x,y)= 
  \begin{cases}
    (x{+}1,y), & (x,y) \in A^{\rightarrow},
    \\ (x{-}1,y), & (x,y) \in A^{\leftarrow},
    \\ (x,y{+}1), & (x,y) \in A^{\uparrow},
    \\ (x,y{-}1), & (x,y) \in A^{\downarrow},
    \\ (x{-}1,y{+}1), & (x,y) \in A^{\nwarrow},
    \\ (x{+}1,y{-}1), & (x,y) \in A^{\searrow},
    \\ (x,y), & \text{otherwise,}
  \end{cases}
  \]
where the sets $A^{\rightarrow}, \,A^{\leftarrow}, \,A^{\uparrow}, \,A^{\downarrow}, \,A^{\nwarrow}, \,A^{\searrow}$ are defined as follows:
\begin{align*}&
A^{\rightarrow} := (\{1\}{\times}\{1,2,2k\}) \cup (\{m\}{\times}\{4,5,\ldots,2k{-}2\}),
\\&
A^{\leftarrow} := (\{3\}{\times}\{2k{-}1,2k\}) \cup (\{m{+}2\}{\times}\{2,3,\ldots,m{+}1\}),
\\&
A^{\uparrow} := (\{4,5,\ldots,m{+}1\}{\times}\{1\}) \cup (\{1,2\}{\times}\{2k{-}3\}) \cup (\{m{+}3,m{+}4,\ldots,2k\}{\times}\{2k{-}3\}),
\\&
A^{\downarrow} := (\{2,3,\ldots,m{-}1\}{\times}\{3\}) \cup (\{m{+}1,m{+}2,\ldots,2k\}{\times}\{2k{-}1\}),
\\&
A^{\nwarrow} := \{(m{+}2,1),(3,m{+}2)\},
\\&
A^{\searrow} := \{(1,3),(m,2k{-}1)\}.
\end{align*}
Then we can easily see that $1_{U_2} \simeq h_0$, and hence we obtain that $C_3 = h_0(C_2)$ is a deformation retract of $C_2$.

Finally, we define a continuous map $h_1 : C_3 \to C_3$ by 
\[h_1(x,y)= 
  \begin{cases}
    (1,2k{-}1), & (x,y) = (1, 2k{-}2),(2,2k{-}2),(2,2k{-}1),
    \\ (3,1), & (x,y) = (2,1),(2,2),(3,2),
    \\ (m,3), & (x,y) = (m,2),(m{+}1,2),(m{+}1,3),
    \\ (m{+}2,m{+}2), & (x,y) = (m{+}1,m{+}2),(m{+}1,m{+}3),(m{+}2,m{+}3),
    \\ (x,y) & \text{otherwise.}
  \end{cases}
  \]

Then we can easily see that $1_{C_3} \simeq h_1$, and hence we obtain that $C = h_1(C_2)$ is a deformation retract of $C_3$, where $C$ can be described as follows.
\begin{align*}
C = &\{(a,a{-}2) \mid a = 3,4\}\cup\{(b,2) \mid b = 4, \ldots, m{-}1\}
\\ & \cup\{(c,c{+}3{-}m) \mid c = m{-}1,m,m{+}1\} \cup\{(m{+}1,d) \mid d = 4, \ldots, m{+}1\}
\\ & \cup\{(e,e) \mid e = m{+}1,m{+}2,m{+}3\} \cup\{(f,m{+}3) \mid f = m{+}3, \ldots, 2k\}
\\ & \cup\{(g,g{+}2k{-}2) \mid g = 1,2\}.
\end{align*}
Then the arguments given above show that $C$ is a deformation retract of $U$.

Since $\deg(\pi_1|_{C}) = \deg(\pi_2|_{C}) = 1$ and $U' \homeo \Cir^1_{2k{+}m{+}2}$, we can proceed to obtain that $\pi_1|_C \simeq \pi_2|_C$ by Theorem \ref{cir map}.
A similar argument for $V$ works to deduce $\pi_1|_V \simeq \pi_2|_V$, and hence we obtain that $\tc(\Cir^1_k)=1$.

The Figure 2 below shows how the open set $U$ in the first figure is deformed into the core $C$ as a retract in the last figure, in the cace when $k=6$. 
\begin{figure}[htbp]
  \begin{center} \includegraphics[width=15cm]{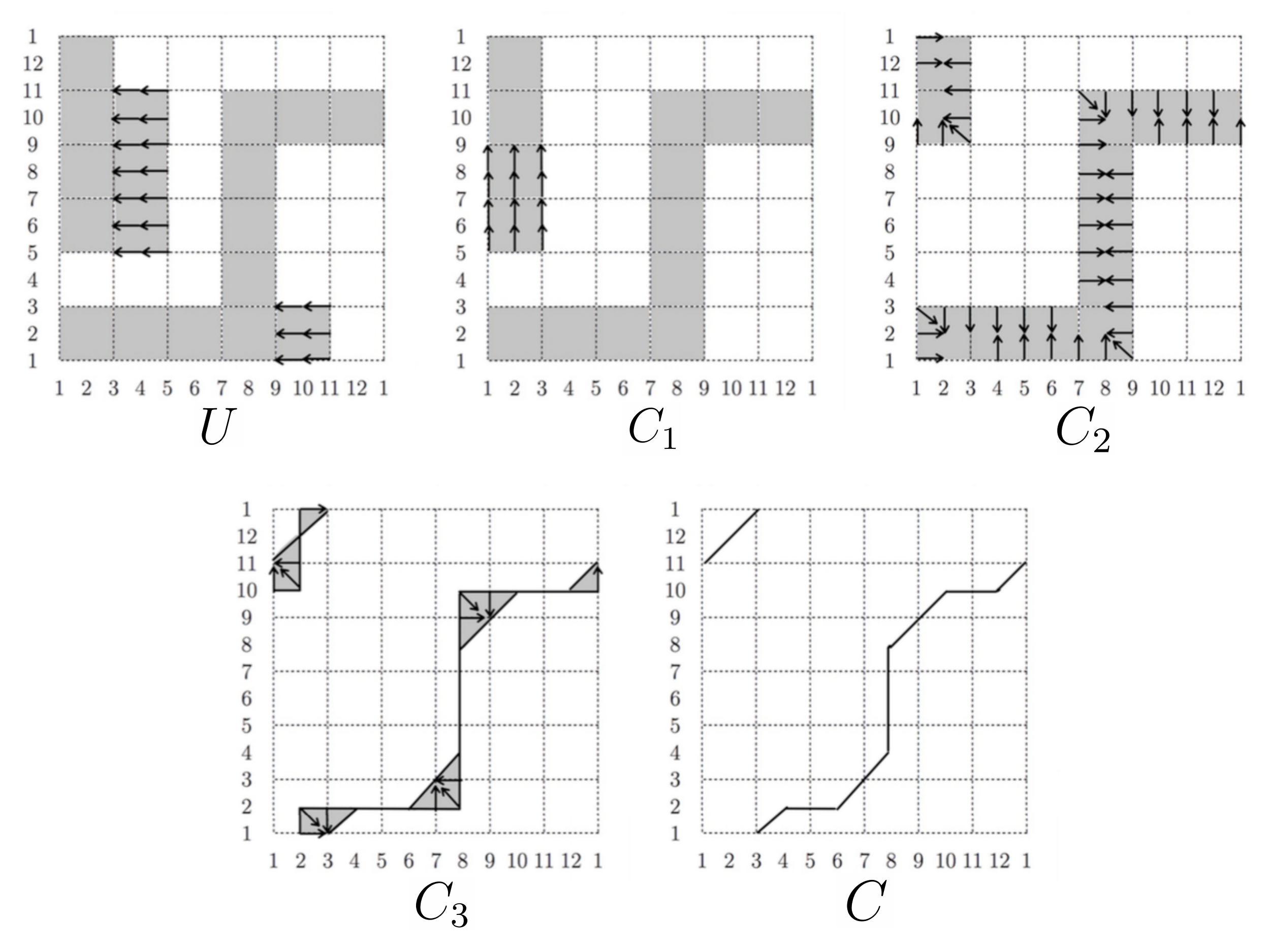}  \end{center}
  \caption{}
\end{figure}
\end{proof}

\section{The proof of $\tc(\Cir^1_4)=2$}
To prove $\tc(\Cir^1_4)=2$, we use a square-shaped figure of a Khalimsky torus $\double{\Cir^1_4}$, where $|\mathcal{K}(\double{\Cir^1_4})| \homeo \double{S^1}$, decomposed as a collection of small squares.

$$
\begin{Torus}{4}{4}
\newcount\inti
\inti=0 \loop\ifnum\inti<4
\newcount\intj \newcount\intk
\intj=\inti
\multiply \intj by 30
\intk=\intj
\advance \intk by 15
\put(-15,\number\intj){\makebox(0,0)[cc]{$a_{\number\inti}$}}
\put(-15,\number\intk){\makebox(0,0)[cc]{$b_{\number\inti}$}}
\put(\number\intj,-15){\makebox(0,0)[cc]{$a_{\number\inti}$}}
\put(\number\intk,-15){\makebox(0,0)[cc]{$b_{\number\inti}$}}
\multiconnect(0,\number\inti)(1,0){5}(0,1)
\multiconnect(\number\inti,0)(0,1){5}(1,0)
\advance\inti by 1\repeat
\put(-15,120){\makebox(0,0)[cc]{$a_{0}$}}
\put(120,-15){\makebox(0,0)[cc]{$a_{0}$}}
\put(60,-35){\makebox(0,0)[cc]{\text{\textsc{Figure 2.}\, The square decomposition}}}
\end{Torus}
$$\vskip4ex
In the square-shaped figure, each small square represents the following open set in $\double{\Cir^1_4}$:

$$\begin{Torus}{2}{2} 
\hpoints{$a$}
\hpoints{$b$}
\hpoints{$a'$}
\vpoints{$a$}
\vpoints{$b$}
\vpoints{$a'$}
\inti=1 \loop\ifnum\inti<3 
\multiconnect(1,\number\inti)(2,0){1}(1,0)
\multiconnect(1,\number\inti)(2,0){1}(1,1)
\multiconnect(1,\number\inti)(2,0){1}(0,1)
\multiconnect(1,\number\inti)(2,0){1}(-1,1)
\multiconnect(1,\number\inti)(2,0){1}(-1,0)
\multiconnect(1,\number\inti)(2,0){1}(-1,-1)
\multiconnect(1,\number\inti)(2,0){1}(0,-1)
\multiconnect(1,\number\inti)(2,0){1}(1,-1)
\advance\inti by 2\repeat
\inti=0 \loop\ifnum\inti<4 
\multiconnect(0,\number\inti)(1,0){2}(1,0)
\multiconnect(\number\inti,0)(0,1){2}(0,1)
\advance\inti by 2\repeat
\inti=1 \loop\ifnum\inti<3 
\multilpoints(0,\number\inti)(2,0){2}{1}
\multilpoints(1,\number\inti)(2,0){1}{2}
\advance\inti by 2\repeat
\inti=0 \loop\ifnum\inti<4 
\multilpoints(0,\number\inti)(2,0){2}{0}
\multilpoints(1,\number\inti)(2,0){1}{1}
\advance\inti by 2\repeat
\end{Torus}$$

Then a principal open cover of $\double{\Cir^1_4}$ by $n$-open subsets is in one-to-one correspondence with an $n$-coloring on $C$ the set of all small squares in Figure 2.
So we must consider a coloring on $C$, where an $n$-coloring on $C$ can be identified with a map $\chi : C \to \integral/n = \{0,1,\dots,n{-}1\}$. 
\begin{pro}\label{line}
If $|\chi^{-1}(i)|$ contains a whole horizontal or a whole vertical line, then $\chi^{-1}(i)$ is not sectional-categorical.
\end{pro}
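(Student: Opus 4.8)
The plan is to translate section-categoricity of the colour class $\chi^{-1}(i)$ into a comparison of the two coordinate projections, and then to restrict attention to an open subset of $\chi^{-1}(i)$ that is homotopy equivalent to $\Cir^1_4$ itself, on which the two projections are visibly non-homotopic.

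First I would record the standard reduction. For an open $W\subseteq\double{\Cir^1_4}$, a continuous section of $\pi\colon P\Cir^1_4\to\double{\Cir^1_4}$ over $W$ is, by the exponential law (the interval being locally compact Hausdorff), the same datum as a homotopy between the restricted projections $\pi_1|_W,\pi_2|_W\colon W\to\Cir^1_4$; moreover a section over $W$ restricts to a section over any open $W'\subseteq W$. Hence, to prove that $\chi^{-1}(i)$ is not section-categorical it suffices to exhibit an open $W$ contained in the principal open set associated with $\chi^{-1}(i)$ — namely the union of the down-sets $[-,(b_p,b_q)]$ over the squares $(b_p,b_q)$ coloured $i$ — such that $\pi_1|_W\not\simeq\pi_2|_W$.

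Now suppose, without loss of generality (the two cases being interchanged by swapping the two coordinates), that $\chi^{-1}(i)$ contains all squares $(b_p,b_{q_0})$, $p\in\integral/4$, for some fixed $q_0$. Each square is the down-set $[-,(b_p,b_q)]=[-,b_p]\times[-,b_q]$ of a maximal point of $\double{\Cir^1_4}$, so
$$
W:=\Cir^1_4\times[-,b_{q_0}]=\Bigl(\bigcup_{p\in\integral/4}[-,b_p]\Bigr)\times[-,b_{q_0}]=\bigcup_{p\in\integral/4}\bigl([-,b_p]\times[-,b_{q_0}]\bigr)
$$
is contained in the colour-$i$ open set, using $\bigcup_p\{a_p,a_{p+1},b_p\}=\Cir^1_4$; and $W$ is open, being a product of open sets. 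The factor $[-,b_{q_0}]=\{a_{q_0},a_{q_0+1},b_{q_0}\}$ has maximum $b_{q_0}$, hence is contractible (strong collapse the up beat points $a_{q_0},a_{q_0+1}$). Therefore $\pi_1|_W$ is just the first projection $W\to\Cir^1_4$, which is a homotopy equivalence, whereas $\pi_2|_W$ factors through the contractible space $[-,b_{q_0}]$ and so is nullhomotopic. If $\pi_1|_W\simeq\pi_2|_W$, the homotopy equivalence $\pi_1|_W$ would be nullhomotopic, forcing $\Cir^1_4$ to be contractible; but $\Cir^1_4$ is a minimal finite space with more than one point (equivalently, $|\mathcal{K}(\Cir^1_4)|=S^1$ is not contractible), a contradiction. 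Hence $\pi_1|_W\not\simeq\pi_2|_W$, and by the first step $\chi^{-1}(i)$ is not section-categorical.

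The only real care needed lies in the first step: making the dictionary between a square, the maximal point it represents, the associated principal open set, and the correspondence ``section $\Leftrightarrow$ homotopy of the two projections'' completely precise, and checking that it is compatible with passing to open subsets. Once that is in place, the rest is a two-line homotopy computation — the projection onto the ``long'' factor of $\Cir^1_4\times[-,b_{q_0}]$ is an equivalence while the other projection is nullhomotopic — so I do not expect a genuine obstacle.
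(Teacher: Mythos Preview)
Your argument is correct and follows the same overall shape as the paper's proof: restrict to a circle-like subspace lying inside the colour-$i$ open set and show that the two coordinate projections cannot be homotopic there. The difference lies in the tool you use at the final step. The paper takes the thinner subspace $V=\Cir^1_4\times\{a\}$ (an arbitrary horizontal slice, not necessarily open), notes $\deg(\pi_1|_V)=1\neq 0=\deg(\pi_2|_V)$, and appeals directly to Theorem~\ref{cir map} (the homotopy classification of maps between Khalimsky circles) to conclude $\pi_1|_V\not\simeq\pi_2|_V$. You instead keep the full open band $W=\Cir^1_4\times[-,b_{q_0}]$, observe that $\pi_1|_W$ is a homotopy equivalence while $\pi_2|_W$ is nullhomotopic, and finish with the minimality of $\Cir^1_4$ (Stong's theorem). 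Your route is slightly more self-contained, since it avoids invoking the degree theorem from the appendix; the paper's route is shorter once that theorem is available and also illustrates why Theorem~\ref{cir map} is the engine behind all of Section~3. Incidentally, your insistence on $W$ being open is harmless but unnecessary: a homotopy on $\chi^{-1}(i)$ restricts to any subspace, open or not, which is exactly how the paper handles $V$.
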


\begin{proof}
In the case when $|\chi^{-1}(i)|$ contains a horizontal line, we have the subspace $V$ of $\chi^{-1}(i)$ which corresponds to the line, and $V$ is represented as the following set for some element $a \in \Cir^1_4$:
$$V = \{(x, a) \mid x \in \Cir^1_4\}.$$
Hence $V \homeo \Cir^1_4$, $\deg \pi_1|_V = 1$ and $\deg \pi_2|_V = 0$.
If we assume that $\chi^{-1}(i)$ is sectional-categorical, then $\pi_1|_{\chi^{-1}(i)} \simeq \pi_2|_{\chi^{-1}(i)}$.
Since $V \subset \chi^{-1}(i)$, we obtain $\pi_1|_V \simeq \pi_2|_V$, and hence $\deg \pi_1|_V = \deg \pi_2|_V$ by  Theorem \ref{cir map}, which is a contradiction.
In the case when $|\chi^{-1}(i)|$ contains a vertical line, a similar argument lead us to a contradiction, too.
\end{proof}

So, let us call a coloring $\chi : C \to \integral/n$ a simple coloring if $|\chi^{-1}(i)|$ does not contain a horizontal line nor a vertical line for all $i \in \integral/n$.

\begin{pro}\label{cube}
There are only two simple colorings up to isomorphism.

\ytableausetup{centertableaux}
$$
\underset{\text{\rm(I)}}{\ytableaushort{1001,0011,0110,1100}}\qquad
\underset{\text{\rm(II)}}{\ytableaushort{1011,0010,1110,1000}}
$$
\end{pro}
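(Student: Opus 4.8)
The plan is to prove the statement by an explicit finite classification: every simple colouring $\chi\colon C\to\integral/2$ will be normalised, using the symmetries of the square decomposition of $\double{\Cir^1_4}$, to one of a short list of candidates, and each candidate will then be seen either to violate simplicity or to coincide with (I) or (II). The symmetries available are those coming from the finite space $\double{\Cir^1_4}$ itself: the group generated by the dihedral symmetries of each $\Cir^1_4$-factor (the cyclic shift $a_i\mapsto a_{i+1},\,b_i\mapsto b_{i+1}$ together with a reflection) and the interchange of the two factors. On $C\cong\integral/4\times\integral/4$ this is the group generated by cyclic row-shifts, cyclic column-shifts, the reflections of the rows and of the columns, and transposition; together with the swap of the two colours it is the (order $256$) group under which ``isomorphism of colourings'' is understood. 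Since the colour-swap exchanges $\chi^{-1}(0)$ and $\chi^{-1}(1)$, we may assume from the outset that $|\chi^{-1}(1)|\le 8$.

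Next I would stratify the candidates by the row-weight vector $(r_0,r_1,r_2,r_3)$, where $r_j$ is the number of colour-$1$ squares in row $j$. Since a simple colouring has no monochromatic row, $r_j\in\{1,2,3\}$ for each $j$ and $\sum_j r_j=|\chi^{-1}(1)|\le 8$; the column-weight vector $(c_0,c_1,c_2,c_3)$ obeys the same constraints with the same total. Applying the cyclic shifts and the reflection to the rows, and independently to the columns, one puts these two weight vectors into normal form, which leaves only a handful of profiles to examine — the balanced profile $(2,2,2,2)$, profiles such as $(2,2,2,1)$, the profile $(3,1,3,1)$, and so on. For each admissible profile I would write down, up to the residual symmetry, the possible row-by-row placements of the colour-$1$ squares and discard those that create a monochromatic line; Proposition \ref{line} is precisely what licenses this pruning, and it is the decisive ingredient. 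I expect the balanced profile to collapse to the single pattern (I), the remaining profiles to collapse to (II), and every other profile to be eliminated outright.

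Finally one checks that (I) and (II) are genuinely inequivalent: their weight profiles $(2,2,2,2)$ and $(3,1,3,1)$ cannot be exchanged by any symmetry of $C$, and each is carried to an isomorphic copy of itself by the colour-swap, so the classification has exactly two entries. The real substance of the proof, and the step where an error would most easily slip in, is establishing completeness and non-redundancy of the case split: one must be certain that no simple colouring escapes the stratification and that each claimed symmetry reduction is legitimately available in the branch where it is used. I expect the balanced profile $(2,2,2,2)$ to be the hardest branch, since there the row patterns alone impose the least and one must lean on the column conditions to force the pattern to be (I). An exhaustive search over the $2^{16}$ colourings of $C$, quotiented by the symmetry group, provides an independent check.
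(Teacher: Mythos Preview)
Your outline sets up a legitimate brute-force route, but it omits the one combinatorial constraint that makes the classification finite enough to land on exactly (I) and (II). The definition of ``simple'' is that the \emph{open set} $U_i=\bigcup_{s\in\chi^{-1}(i)}[-,s]$ contains no subspace of the form $\Cir^1_4\times\{y\}$ or $\{x\}\times\Cir^1_4$. Here $y$ ranges over \emph{all eight} elements of $\Cir^1_4$, not only the four open points $b_j$. The four rows of your $4\times4$ grid detect only the lines at height $b_j$; the lines at height $a_j$ lie \emph{between} two adjacent rows, and $U_0$ contains such a line precisely when, in those two rows, no column carries a vertical pair of $1$'s. Thus simplicity is not ``no monochromatic row or column of the grid'' but the strictly stronger pair of conditions the paper records as (1) and (2): every pair of adjacent rows must contain a column \ytableausetup{smalltableaux}\ytableaushort{0,0}\ytableausetup{nosmalltableaux}\ and a column \ytableausetup{smalltableaux}\ytableaushort{1,1}\ytableausetup{nosmalltableaux}, and dually for columns.

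Your stratification only uses the weaker condition, and with that alone the balanced profile $(2,2,2,2)$ does \emph{not} collapse to (I). The checkerboard
\[
\ytableausetup{centertableaux}\ytableaushort{0101,1010,0101,1010}
\]
has row and column weights $(2,2,2,2)$, no monochromatic row or column, and is not in the orbit of (I) (its rows have the $1$'s in non-adjacent positions, which no cyclic shift or reflection turns into adjacent ones); yet it is not simple, since in any two adjacent rows every column is $01$ or $10$. So the step where you ``expect the balanced profile to collapse to the single pattern (I)'' fails unless you feed in the between-row and between-column line constraints. Once you do, you are essentially carrying out the paper's argument: the paper derives (1)--(2) first and then uses them to force the filling directly, rather than stratifying by weight profile. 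Your symmetry set-up and the inequivalence check between (I) and (II) are fine; the missing idea is precisely conditions (1) and (2).
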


\begin{proof}
By the assumption on the coloring $\chi$, we have
\ytableausetup{smalltableaux}
\begin{itemize}
	\item[$(1)$] 
	In the continued two rows of squares \ytableaushort{{}{}{}{},{}{}{}{}}\:, we must find out \ytableaushort{0,0}\: and \ytableaushort{1,1}\:. \vspace{1ex}
	\item[$(2)$] 
	In the continued two columns of squares \ytableaushort{{}{},{}{},{}{},{}{}}\:, we must find out \ytableaushort{00}\: and \ytableaushort{11}\:. \vspace{1ex}
\end{itemize}

In (2) above, there are only two cases.  \ytableaushort{00}\: and \ytableaushort{11}\: are connected or not.  Forcussing on the middle two columns, we may consider the following two patterns without loss of generality.
\ytableausetup{nosmalltableaux}
$$
\underset{\text{(I)}}{\ytableaushort{{}{*(yellow)0}{*(yellow)0}{},{}{*(yellow)}{*(yellow)}{},{}{*(yellow)1}{*(yellow)1}{},{}{*(yellow)}{*(yellow)}{}}}\qquad
\underset{\text{(II)}}{\ytableaushort{{}{*(yellow)}{*(yellow)}{},{}{*(yellow)}{*(yellow)}{},{}{*(yellow)1}{*(yellow)1}{},{}{*(yellow)0}{*(yellow)0}{}}}
$$

Ignoring the upper part, either red or blue part must be \ytableausetup{smalltableaux}\ytableaushort{0,0}\: in both the cases.
\ytableausetup{nosmalltableaux}
$$
\ytableaushort{{}{*(yellow)}{*(yellow)}{} ,{}{*(yellow)}{*(yellow)}{},{*(red)}{*(yellow)1}{*(yellow)1}{*(blue)},{*(red)}{*(yellow)}{*(yellow)}{*(blue)}}
$$
Without loss of generality, we assume that the blue part is \ytableausetup{smalltableaux}\ytableaushort{0,0}\:.  Using the conditions (1) and (2), the remaining part can be filled as follows in cases (I) and (II).
\ytableausetup{nosmalltableaux}
$$
\underset{\text{(I)}}{\ytableaushort{{}{*(yellow)0}{*(yellow)0}{},{}{*(yellow)}{*(yellow)}{},{}{*(yellow)1}{*(yellow)1}{*(blue)0},{}{*(yellow)}{*(yellow)}{*(blue)0}}} \to \ytableaushort{100{},{}{}11,{}110,1{}{}0} \to \ytableaushort{100{},0011,0110,1{}{}0}  \to \ytableaushort{1001,0011,0110,11{}0}  \to \ytableaushort{1001,0011,0110,1100} 
$$
$$
\underset{\text{(II)}}{\ytableaushort{{}{*(yellow)}{*(yellow)}{},{}{*(yellow)}{*(yellow)}{},{}{*(yellow)1}{*(yellow)1}{*(blue)0},{}{*(yellow)0}{*(yellow)0}{*(blue)0}}} \to \ytableaushort{1{}{}{},{}{}{}{},1110,1000} \to \ytableaushort{1{}{}{},00{}0,1110,1000}  \to \ytableaushort{1{}11,0010,1110,1000}  \to \ytableaushort{1011,0010,1110,1000}
$$
It completes the proof of the proposition.
\end{proof}

\begin{thm}
 $\tc(\Cir^1_4)=2.$
\end{thm}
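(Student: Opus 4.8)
The plan is to prove the two estimates $\tc(\Cir^1_4)\le 2$ and $\tc(\Cir^1_4)\ge 2$ separately. The upper bound is the easy half: combining the general inequality $\tc(X)\le\cat(\double{X})$ with Kandola's computation $\cat(\Cir^1_4\times\Cir^1_4)=2$ gives $\tc(\Cir^1_4)\le 2$ at once. (Alternatively one may exhibit an explicit principal cover of $\double{\Cir^1_4}$ by three section-categorical open sets, i.e. a $3$-coloring of the set $C$ of small squares with each colour class section-categorical, in the spirit of the argument for $k\ge 5$.) The lower bound $\tc(\Cir^1_4)\ge 2$ is where the work lies, and I would prove it by contradiction.

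Suppose then that $\tc(\Cir^1_4)=1$; note that $\tc(\Cir^1_4)\ge 1$ is already clear, since $1=\tc(|\mathcal{K}(\Cir^1_4)|)\le\tc(\Cir^1_4)$. By the principal-cover theorem of Section~\ref{sect:background} there would then be a principal open cover of $\double{\Cir^1_4}$ by two section-categorical open sets, equivalently a $2$-coloring $\chi:C\to\integral/2$ of the small squares with both $\chi^{-1}(0)$ and $\chi^{-1}(1)$ section-categorical. By Proposition~\ref{line} neither colour class can contain a whole horizontal or a whole vertical line, so $\chi$ is a simple colouring, and by Proposition~\ref{cube} it must be, up to the symmetries of the Khalimsky torus, one of the two colourings $(\mathrm{I})$ or $(\mathrm{II})$. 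It therefore suffices to rule out both of these, i.e. to show that in each of them one colour class is \emph{not} section-categorical.

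To do this I would locate, inside the offending colour class $\chi^{-1}(i)$, a subspace $W$ which is a genuine Khalimsky circle $W\homeo\Cir^1_4$ and on which the two projections restrict to \emph{distinct} self-maps of $\Cir^1_4$ of degree $1$; since $|1|\not<4/4=1$, Theorem~\ref{cir map} (with $m=n=4$) then gives $\pi_1|_W\not\simeq\pi_2|_W$, hence $\pi_1|_{\chi^{-1}(i)}\not\simeq\pi_2|_{\chi^{-1}(i)}$, contradicting section-categoricity of $\chi^{-1}(i)$. Concretely, for colouring $(\mathrm{I})$ one takes $W$ to be the $\Cir^1_4$ whose maximal points are $(b_0,b_3),(b_1,b_0),(b_2,b_1),(b_3,b_2)$ — a ``shifted diagonal'', all of whose small squares are coloured $1$ — and whose minimal points are $(a_1,a_0),(a_2,a_1),(a_3,a_2),(a_0,a_3)$; for colouring $(\mathrm{II})$ one takes the ``doubly shifted diagonal'' $W$ with maximal points $(b_0,b_2),(b_1,b_3),(b_2,b_0),(b_3,b_1)$, all coloured $0$, and minimal points $(a_1,a_3),(a_2,a_0),(a_3,a_1),(a_0,a_2)$. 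In each case $\pi_1|_W$ is a homeomorphism of $W$ onto $\Cir^1_4$, while $\pi_2|_W$ is that homeomorphism followed by a nontrivial cyclic shift of $\Cir^1_4$, so the two maps are distinct (both of degree $1$). Putting the two halves together yields $\tc(\Cir^1_4)=2$.

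The step I expect to be the main obstacle is verifying that each such $W$ really is a copy of $\Cir^1_4$ as a subspace of $\double{\Cir^1_4}$: one must check that the order induced on its eight points has no relations beyond those of the circle, i.e. that each chosen minimal point lies below exactly the two chosen maximal points adjacent to it and below neither of the other two. This is a short but genuine computation with the explicit poset structure of the Khalimsky torus; once it is in place, the facts that $W\subset\chi^{-1}(i)$, that $\deg\pi_1|_W=\deg\pi_2|_W=1$, and that $\pi_1|_W\neq\pi_2|_W$ are all immediate, and the contradiction follows from Theorem~\ref{cir map}. (Depending on how one matches the diagrams of Proposition~\ref{cube} to the torus picture, the loop used for $(\mathrm{I})$ may instead come out as an anti-diagonal $\Cir^1_4$ on which $\pi_1|_W$ and $\pi_2|_W$ have opposite degrees $1$ and $-1$; this only makes the non-homotopy more transparent.)
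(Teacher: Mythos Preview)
Your proposal is correct and follows essentially the same route as the paper: the upper bound via $\tc(\Cir^1_4)\le\cat(\double{\Cir^1_4})=2$, and the lower bound by assuming $\tc(\Cir^1_4)=1$, reducing via Propositions~\ref{line} and~\ref{cube} to the two simple $2$-colourings, and in each case locating an off-diagonal $\Cir^1_4$-subspace on which Theorem~\ref{cir map} forbids $\pi_1\simeq\pi_2$. Your version is in fact more explicit than the paper's, which merely asserts the existence of such a ``non-trivial Khalimsky circle not on the diagonal line'' without naming it.
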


\begin{proof}
Since $1= \tc(S^1) \leq \tc(\Cir^1_4) \leq \cat(\double{\Cir^1_4})=2$, it is sufficient to show that $1<\tc(\Cir^1_4)$ where we know $1 \le \tc(\Cir^1_4) \le 2$.
By assuming $\tc(\Cir^1_4)=1$, we are lead to a contradiction.
If $\tc(\Cir^1_4)=1$, then there is an open covering $\{U_1,U_2\}$ of $\double{\Cir^1_4}$ such that $\pi_1|_{U_i} \simeq \pi_2|_{U_i}$, where $U_{i}$'s should not include a whole horizontal or a whole vertical line by Proposition \ref{line}.
Thus they must be isomorphic to (I) or (II) in Proposition \ref{cube}.
In either case, $U_{0}$ or $U_{1}$ must contain a non-trivial Khalimsky circle $\Cir \homeo \Cir^1_4$, which is not on the diagonal line.
Then by the definition of topological complexity, we have $\pi_1 |_{\Cir} \simeq \pi_2 |_{\Cir}$.
Hence $\pi_1 |_{\Cir} = \pi_2 |_{\Cir}$ by Proposition \ref{cir map}, which is contradicting that $\Cir$ is not on the diagonal line.
Thus $1<\tc(\Cir^1_4)$ and $\tc(\Cir^1_4)=2$.
\end{proof}

\appendix

\section{Homotopy classification of maps between Khalimsky Circles}

We begin this appendix with introducing a Khalimsky line denoted simply by $\integral$.  The Khalimsky line $\integral$ is the set of integers $\integral$ with the topology generated by an open basis $\{\{2k,\: 2k{+}1,\: 2k \}\mid k \in \mathbb{Z} \}$.
For two integers $k$ and $l$, we denote by $[k,l]$ the subspace  $\{z \in \integral \mid k \leqq z \leqq l\}$ of $\integral$.

Khalimsky line is known to be an Alexandroff space, and an Alexandroff space has a natural ordering and, in fact, it is in one-to-one correspondence with a poset, like a finite topological space.
Therefore, the topological properties of an Alexandroff space can be described in terms of posets.

In particular, we use a relation $\prec$ on Khalimsky line as follows : for $a,b \in \mathbb{Z}$ satisfying $|a{-}b| \leq 1$,
\[
a \prec b \overset{\text{def}} \iff a=b \text{ or } a \text{ is even.}
\]

However, we keep in mind that the relation ``$\leq$'' on $\mathbb{Z}$ is used in the usual meaning of ``greater than  or equal to''.

\begin{pro}\label{Kconti}
A function $f : [k,l] \to \mathbb{Z}$ is continuous if and only if it is an order-preserving map $f : ([k,l], \prec) \to (\mathbb{Z}, \prec)$.
\end{pro}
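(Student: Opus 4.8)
The plan is to obtain Proposition~\ref{Kconti} as the special case, for $[k,l]$ and $\integral$, of the standard equivalence between continuity of a map of Alexandroff spaces and order-preservation of the corresponding map of posets; the only work is to record the two topologies correctly in terms of $\prec$. First I would note that, by the very choice of the generating basis, a subset of $\integral$ is open if and only if it is a down-set for $\prec$; equivalently, for each $z$ the smallest open set containing $z$ is the principal down-set $U_z=\{w\in\integral\mid w\prec z\}$ (a single even point, or the three-element set consisting of an odd point together with its two even neighbours). The same description holds on the subspace $[k,l]$: its open sets are precisely the down-sets of the restricted order, with minimal open neighbourhoods $U_z\cap[k,l]$, because intersecting a down-set of $\integral$ with $[k,l]$ gives a down-set of $[k,l]$ and, conversely, every down-set $D$ of $[k,l]$ equals $[k,l]\cap(\text{down-closure of }D\text{ in }\integral)$. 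I would state this as a one-line preliminary and then prove the two implications.

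For the ``if'' direction, assume $f$ is order-preserving and let $W\subseteq\integral$ be open, i.e. a down-set. If $b\in f^{-1}(W)$ and $a\prec b$ in $[k,l]$, then $f(a)\prec f(b)\in W$, so $f(a)\in W$ by downward closure of $W$, hence $a\in f^{-1}(W)$; thus $f^{-1}(W)$ is a down-set, hence open, and $f$ is continuous. For the ``only if'' direction, assume $f$ is continuous and let $a\prec b$ in $[k,l]$. The set $U_{f(b)}=\{w\in\integral\mid w\prec f(b)\}$ is open in $\integral$, so $f^{-1}(U_{f(b)})$ is open in $[k,l]$; it contains $b$, and being a down-set containing $b$ it also contains $a$ (since $a\prec b$), so $f(a)\in U_{f(b)}$, that is $f(a)\prec f(b)$. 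Hence $f$ is order-preserving.

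I do not expect a genuine obstacle: the content is entirely the Alexandroff--poset dictionary specialised to intervals of the Khalimsky line, and the argument is formal once ``open $=$ down-set'' has been fixed (consistently with the convention of Section~\ref{sect:background}) and the subspace topology on $[k,l]$ has been identified with the order topology of the restricted relation. The only thing to be careful about is not inverting this convention. Should one prefer to bypass even the general down-set observation, the ``only if'' direction can be checked directly on the basis: $f$ is continuous iff the preimage of each basic three-element open set of $\integral$ is open in $[k,l]$, and unravelling this condition for consecutive integers reproduces exactly order-preservation on pairs $a\prec b$ with $|a-b|\le 1$, which by transitivity of $\prec$ is equivalent to order-preservation on all of $[k,l]$.
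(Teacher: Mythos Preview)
Your argument is correct and follows essentially the same route as the paper: both appeal to the Alexandroff dictionary identifying continuous maps with order-preserving maps. The paper's own proof is a single sentence invoking this correspondence, while you have spelled out the details (open sets are down-sets, minimal neighbourhoods are principal down-sets, and the subspace topology on $[k,l]$ agrees with the restricted order), which is a reasonable expansion.
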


\begin{proof}
That is because $f|: [k,l] \to f([k,l])$ is contuinuous iff $f|$ is order-preserving.
\end{proof}

For any two continuous functions $f, g ; [k,l] \to \integral$ with $f(k)=g(k)$ and $f(l)=g(l)$, we denote by $f \sim g$, if they are homotopic relative to the both ends $k$ and $l$.
We denote by $\varepsilon_{a} : A \to \integral$ the constant function at $a$ for an Alexandroff space $A$ and $a \in \integral$.
From now on, $f : [k,l] \to \integral$ denotes a continuous function.

\begin{pro}\label{cons}
If $f : [k,l] \to \integral$ satisfies $f(k)=f(l)=a$, then $f \sim \varepsilon_{a}$.
\end{pro}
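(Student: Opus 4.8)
The plan is to induct on the nonnegative integer
$N(f):=\sum_{z=k}^{l}|f(z)-a|$, which vanishes precisely when $f=\varepsilon_{a}$, so that case is the base. For the inductive step, suppose $N(f)>0$. Then $f$ attains a value strictly greater than $a$ or strictly less than $a$; I treat the first case, the other being dual. Put $M=\max_{z}f(z)>a$, fix a point of $f^{-1}(M)$, and let $[p,q]$ be the maximal run of consecutive integers around it on which $f\equiv M$. Since $f(k)=f(l)=a<M$ we have $k<p\le q<l$. The first key point, obtained from Proposition~\ref{Kconti} together with a short parity check on the order $\prec$, is that the values of $f$ just outside this block are forced to be $M-1$: if $M$ is odd the block is bounded by odd integers whose even neighbours have value $M-1$, and if $M$ is even it is bounded by even integers whose odd neighbours have value $M-1$. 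In either case I would define $f'$ to agree with $f$ off $[p,q]$ and to be constantly $M-1$ on $[p,q]$. The parity bookkeeping shows $f'$ is again order-preserving, hence continuous, that $f$ and $f'$ are $\prec$-comparable pointwise (with $f\prec f'$ when $M$ is even and $f'\prec f$ when $M$ is odd), and that $N(f')=N(f)-(q-p+1)<N(f)$. Applying the inductive hypothesis to $f'$, which still has common endpoint value $a$, gives $f'\sim\varepsilon_{a}$, and then $f\sim f'\sim\varepsilon_{a}$.

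The one place where the topology rather than pure combinatorics enters, and where I would be most careful, is the passage ``$g\prec h$ pointwise $\Rightarrow$ $g\simeq h$ rel $\{z : g(z)=h(z)\}$''. For this I would use the explicit homotopy $H(z,t)=g(z)$ for $t<\tfrac12$ and $H(z,t)=h(z)$ for $t\ge\tfrac12$; continuity at $t=\tfrac12$ follows because open sets of an Alexandroff space are down-sets for $\prec$, so $h(z)\in U$ forces $g(z)\in U$, and $H$ is clearly stationary wherever $g=h$. Since $f(k)=f'(k)=a=f'(l)=f(l)$, the homotopy $f\sim f'$ is automatically rel the endpoints with common value $a$, which is exactly the relation $\sim$ we need; this is also why carrying the common endpoint value $a$ through the induction is harmless.

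The main obstacle I anticipate is simply keeping the parity casework under control and uniform. One cannot assume ``maxima occur at odd points'': the profile $0,1,2,1,0$ on $[0,4]$ realizes an even maximum at an even point, so both parities of $M$ really occur, and they produce $f\prec f'$ versus $f\succ f'$; the dual discussion for $m=\min_{z}f(z)<a$ splits in the same way. The cleanest write-up will hinge on phrasing the inductive move as ``replace $f$ on a maximal extremal block by the constant one step closer to $a$'' and verifying in one stroke that in all four sub-cases the result is continuous, $\prec$-comparable to $f$, and of strictly smaller $N$; the small auxiliary fact that makes this go through is that the points of $f^{-1}(M)$ bordering such a block have value exactly $M-1$ (and dually $m+1$ for minimal blocks), which is immediate from $|f(z)-f(z{\pm}1)|\le 1$ and the definition of $\prec$.
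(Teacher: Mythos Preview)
Your argument is correct. The induction on $N(f)$ is sound: the boundary analysis of a maximal block of extreme values is right (by order-preservation, $f(p{-}1)=f(q{+}1)=M{-}1$, and the parity of $M$ forces the parity of $p,q$ exactly as you say), the modified map $f'$ is continuous because at the two joins it is constant equal to $M{-}1$, and the pointwise $\prec$-comparison between $f$ and $f'$ gives a homotopy rel $\{k,l\}$ via the two-step map you describe. Your justification of ``$g\prec h$ pointwise $\Rightarrow g\simeq h$ rel the coincidence set'' is also fine, since $h^{-1}(U)\subset g^{-1}(U)$ for every open $U$.

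The paper, however, dispatches the proposition in a single line, using a different and more conceptual observation: the image $f([k,l])$ is a finite interval of the Khalimsky line and hence strongly deformation retracts onto any of its points, in particular onto $a$; composing $f$ with such a retraction yields the desired homotopy, automatically rel endpoints because $a$ stays fixed. What your induction does is, in effect, carry out this deformation retraction of the image one beat point at a time while tracking it through $f$. Your route has the virtue of being completely explicit and of writing the homotopy as a concrete fence $f=f_0\prec f_1\succ f_2\prec\cdots$, which dovetails with Stong's description of homotopies; the paper's route buys brevity and avoids the four-way parity casework altogether.
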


\begin{proof}
Because $f([k,l])$ is contractible in $\integral$ to $f(k)=f(l)$, we have $f\sim \varepsilon_{f(k)}$.
\end{proof}

From the above proposition we obtain the following corollary using an induction on the number of maximal points of $f$.
\begin{cor}\label{mono}
For a function $f:[k,l] \to \mathbb{Z}$, there is a monotone function $g$ such that $f \sim g$.
\end{cor}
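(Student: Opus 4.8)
The plan is to reduce to the case of distinct endpoint values and then induct on the nonnegative integer $D(f)-(f(l)-f(k))$, where $D(f):=\sum_{i=k}^{l-1}|f(i{+}1)-f(i)|$. First, if $f(k)=f(l)$ then Proposition \ref{cons} gives $f\sim\varepsilon_{f(k)}$ and the constant map is monotone, so we may assume $f(k)\neq f(l)$; composing with the self-homeomorphism $z\mapsto-z$ of $\integral$ (which carries homotopies to homotopies and monotone maps to monotone maps) we may further assume $f(k)<f(l)$. Since $f$ is continuous, Proposition \ref{Kconti} gives $|f(i{+}1)-f(i)|\le1$ for every $i$, so $D(f)\ge|f(l)-f(k)|=f(l)-f(k)$, with equality precisely when every increment $f(i{+}1)-f(i)$ is nonnegative, i.e.\ precisely when $f$ is nondecreasing. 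This is the base case of the induction: when $D(f)-(f(l)-f(k))=0$ the function $f$ is already monotone.

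For the inductive step, suppose $D(f)>f(l)-f(k)$. Then some increment is negative, so there is an index $i$ with $f(i)>f(i{+}1)$; let $[p,i]$ be the maximal constant subinterval through $i$, with value $v$, so that $f(i{+}1)=v-1$ and $i<l$. If $p>k$, then also $f(p{-}1)=v-1$, and Proposition \ref{cons} applied on $[p{-}1,i{+}1]$, where $f$ takes the common value $v-1$ at both ends, yields a homotopy rel $\{p{-}1,i{+}1\}$ from $f|_{[p{-}1,i{+}1]}$ to $\varepsilon_{v-1}$. If instead $p=k$, then $v=f(k)<f(l)$, so the connected set $f([i{+}1,l])$, being a subinterval of $\integral$ containing $v-1$ and $f(l)>v$, also contains $v$; taking the least index $j>i$ with $f(j)=v$, Proposition \ref{cons} on $[k,j]$ yields a homotopy rel $\{k,j\}$ from $f|_{[k,j]}$ to $\varepsilon_{v}$. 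In either case, extending the homotopy by the constant homotopy on the complementary part of $[k,l]$, which is legitimate since the two maps agree there throughout and the splicing is compatible by Proposition \ref{Kconti}, produces a map $g\sim f$ with $g(k)=f(k)$, $g(l)=f(l)$, obtained from $f$ by flattening a segment to a constant. On that segment $f$ had a dip of depth at least $1$, so $D$ has decreased by at least $2$, while $f(l)-f(k)$ is unchanged; hence $D(g)-(g(l)-g(k))<D(f)-(f(l)-f(k))$, and the induction hypothesis applied to $g$ completes the argument.

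The point needing the most care is the subcase $p=k$: a homotopy rel the endpoints $k,l$ cannot move the value $f(k)$, so rather than pushing the plateau at $k$ downward one flattens the initial segment out to the first return of $f$ to the value $f(k)$, and the hypothesis $f(k)<f(l)$ is exactly what guarantees such a return exists. Beyond that, the verifications are routine: that the spliced map $g$ is again digitally continuous (here one uses that the replaced boundary values agree with those of $f$, together with Proposition \ref{Kconti} and the parity constraints it encodes), that $D$ strictly drops, and that a homotopy relative to the endpoints of a subinterval extends by a constant homotopy to a homotopy relative to $\{k,l\}$ on all of $[k,l]$.
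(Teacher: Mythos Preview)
Your overall strategy is sound and close in spirit to the paper's: flatten a local extremum using Proposition~\ref{cons} and induct. The paper's one-line argument simply says ``induction on the number of maximal points of $f$''; your variable $D(f)-(f(l)-f(k))$ is a different but equally valid bookkeeping device.

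There is, however, a genuine gap in your inductive step. You choose an arbitrary index $i$ with $f(i)>f(i{+}1)$, take the maximal constant plateau $[p,i]$ through $i$ with value $v$, and then assert that if $p>k$ one has $f(p{-}1)=v-1$. This does not follow: nothing rules out $f(p{-}1)=v+1$. For a concrete failure, take $f$ on $[0,5]$ with values $2,1,0,1,2,3$ (which is digitally continuous and has $f(0)<f(5)$); choosing the descent at $i=1$ gives $p=1$, $v=1$, but $f(0)=2=v+1$, and Proposition~\ref{cons} does not apply to $[p{-}1,i{+}1]=[0,2]$ since the endpoint values differ.

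The fix is to choose $i$ more carefully: take $i$ to be the \emph{smallest} index with $f(i)>f(i{+}1)$. Then $f$ is nondecreasing on $[k,i]$, so if $p>k$ we must have $f(p{-}1)<v$ and hence $f(p{-}1)=v-1$, as you need. With this choice the rest of your argument goes through unchanged, including the case $p=k$ and the count showing $D$ drops by at least $2$. Equivalently, one may always pick $i$ at a local maximum of $f$; this is exactly the paper's phrasing, and it makes the conclusion $f(p{-}1)=v-1$ (when $p>k$) immediate.
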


\begin{dfn}
For a continuous function $g$ satisfying $g(k) \leq g(l)$ and $\Height(k)=\Height(g(k))$, we can define a continuous function $h_g:[k,l] \to \mathbb{Z}$ by the following formula:
$$h_g(z)=\begin{cases} z+g(k)-k, & z+g(k)-k \leq g(l), \\ g(l), & z+g(k)-k \geq g(l).\end{cases}$$
\end{dfn}

We can easily show that $h_g$ is continuous by Theorem \ref{Kconti}.
\begin{pro}\label{stan}
If $f : [k,l] \to \integral$ satisfies $f(k) \leq f(l)$ and $\Height(k)=\Height(f(k))$, then we obtain $f\sim h_f$.
\end{pro}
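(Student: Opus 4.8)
The plan is to reduce $f$ to a monotone representative using Corollary~\ref{mono}, read off the combinatorial data of such a representative, and then connect any two such representatives by elementary homotopies in the sense of Stong's theorem.

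First I would apply Corollary~\ref{mono} to get a monotone $g$ with $f \sim g$; since $g(k)=f(k)\le f(l)=g(l)$ this $g$ is non-decreasing, and as $g$ agrees with $f$ at $k$ and $l$ we still have $\Height(k)=\Height(g(k))$, so $h_g$ is defined and equals $h_f$ (its formula uses only $k$, $l$, $g(k)=f(k)$ and $g(l)=f(l)$). So it suffices to prove $g\sim h_g$ for a non-decreasing continuous $g$. Set $r=g(l)-g(k)$. By Proposition~\ref{Kconti}, continuity forces $|g(z{+}1)-g(z)|\le 1$, so $g$ is determined by the positions $p_1<\dots<p_r$ at which it increases, the $j$-th increase carrying the value from $g(k){+}j{-}1$ to $g(k){+}j$ on $[p_j,p_j{+}1]$. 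Applying Proposition~\ref{Kconti} on $[p_j,p_j{+}1]$ gives $p_j\equiv g(k){+}j{-}1\pmod 2$, which under the hypothesis $\Height(k)=\Height(g(k))$ (that is, $k$ and $g(k)$ have the same parity) reads $p_j\equiv k{+}j{-}1\pmod 2$ --- exactly the parity of the $j$-th increase of $h_g$, which occurs at $k{+}j{-}1$. Since consecutive increases have opposite parity, $p_{j+1}\ge p_j{+}1$, whence $p_j\ge k{+}j{-}1$ for all $j$; so $h_g$ is the ``leftmost'' non-decreasing continuous function with these endpoints, and it is the target normal form.

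The core of the argument is a single local move. I claim that if $p_j{-}2\ge k$ and $p_j{-}2>p_{j-1}$ (so $g$ is constant, with value $c:=g(k){+}j{-}1$, on $\{p_j{-}2,\,p_j{-}1,\,p_j\}$), then the function $g'$ obtained from $g$ by moving the $j$-th increase from $p_j$ to $p_j{-}2$ satisfies $g\sim g'$. To see this I would pass through the intermediate function $g''$ agreeing with $g$ except that $g''(p_j{-}1)=c{+}1$ and $g''(p_j)=c$: one checks with Proposition~\ref{Kconti} that $g''$ is continuous --- this is precisely where the parity identities of the previous paragraph enter, and where one must track both ``$\le$'' and ``$\prec$'' at once --- and that $g$ and $g''$ differ only at the interior point $p_j{-}1$, where one of $c\prec c{+}1$, $c{+}1\prec c$ holds, so $g\prec g''$ or $g\succ g''$ pointwise in $(\integral,\prec)$; likewise for $g''$ and $g'$. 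Stong's theorem then gives $g\simeq g''\simeq g'$, and since the accompanying homotopy may be chosen stationary wherever the two maps agree --- in particular at $k$ and $l$ --- it is a homotopy rel endpoints, i.e. $g\sim g''\sim g'$.

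Finally I would iterate this move, processing $j=1,2,\dots,r$ in turn and sliding $p_j$ down in steps of $2$ to the smallest admissible value, which --- once $p_1,\dots,p_{j-1}$ have been placed at $k,\dots,k{+}j{-}2$ --- is $k{+}j{-}1$; when every increase has been relocated, $g$ has become $h_g=h_f$, and therefore $f\sim g\sim h_f$. The main obstacle is the continuity-and-comparability bookkeeping inside the local move: because ``$\le$'' here is the ordinary order on $\integral$ while ``$\prec$'' is the Khalimsky order, one has to juggle the parities of domain and target simultaneously, and it is exactly there that the hypothesis $\Height(k)=\Height(f(k))$ is genuinely used. Checking that the successive slides never collide with a neighbouring increase or overshoot $k$ or $l$ is then routine, given $p_j\ge k{+}j{-}1$.
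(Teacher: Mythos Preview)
Your proof is correct and follows essentially the same strategy as the paper: reduce to a monotone representative via Corollary~\ref{mono}, then normalise it to $h_f$ by a finite chain of Stong-type comparisons, with the hypothesis $\Height(k)=\Height(f(k))$ providing exactly the parity needed for the intermediate maps to be continuous. The only difference is bookkeeping --- you slide one increase position two steps to the left at a time via a single-point bump $g''$, whereas the paper lifts an entire plateau by $1$ in one stroke via a zigzag intermediate $g_1$ --- but both procedures terminate at $h_f$ after finitely many elementary moves.
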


\begin{proof}
Without loss of generality, we may assume that $k=0$ and $f$ is monotone increasing satisfying $f(0)=0$ by Corollary \ref{mono}.
Let $f_{0}=f$.  Then by definition, $f_{0}$ is a continuous monotone increasing function, and hence we have $f_0(z{+}1) \in \{f_0(z),f_0(z){+}1\}$.
Let $k_0:=\min\{z \in [k,l] \mid f_0(z{+}1) = z\}$ and $l_0:=\max\{z \in [k,l] \mid f_0(z)=k_0\}$.
Then we have $f_0 |_{[0,k_0]} = h_f|_{[0,k_0]}$ and $k_{0} \leq f_{0}(l)$.
In case when $k_{0}=f_{0}(l)$, $f_{0}=h_{f}$ and we have done.
So we assume that $k_{0}<f_{0}(l)$. 
Then we have that $f_0(l_0{+}1)=k_0{+}1$.
Let $f_{1}, \,g_{1} : [0,l] \to \integral$ as follows:

$$f_1(z)=\begin{cases} k_0, &  z=k_0, \\ k_0+1, & z \in [k_0+1, l_0+1],\\ f_0(z), & \text{otherwise},\end{cases}$$
$$g_1(z)=\begin{cases} k_0, &  z \in [k_0,l_0+1] \text{ and } \Height(k_0)=\Height(z), \\ k_0+1, &z \in [k_0,l_0+1] \text{ and } \Height(k_0) \ne \Height(z), \\ f_0(z), & \text{otherwise}.\end{cases}$$
We then have either $f_0 \prec g_1 \succ f_1$ or $f_0 \succ g_1 \prec f_1$.
In either case, we obtain $f_0 \sim f_1$.
Let $k_1:=\min\{z \in [k,l] \mid f_1(z{+}1) = z\}$ and $l_1:=\max\{z \in [k,l] \mid f_1(z)=k_1\}$.
Then we obtain $k_1=k_0+1$.
If $k_1=f(l)$, then $f_0\sim f_1 = h_f$, and we have done.
If $k_1<f(l)$, we can continue the above process to obtain a sequence of functions $f_{1}, f_{2}, \dots, f_{n}$ for a sufficiently large $n \le l{-}k$, such that $k_{n}=f(l)$ and $f=f_{0} \sim f_{1} \sim \cdots \sim f_{n}=h_{f}$.
\end{proof}

For $m \geq 2$, the quotient space $\integral/2m = \{\overline{0},\overline{1},\dots,\overline{2m{-}1}\}$ is homeomorphic with the Khalimsky circle $\Cir^1_m$.
Moreover we have the following proposition.

\begin{lem}\label{lift}
For $m \geq 2$ and a continuous map $f: [k,l] \to \mathbb{Z}/2m$, there exists a lift $\tilde{f}$ of $f$ which makes the following diagram commutative: 
\[
\begin{tikzcd}
	\{k\} \arrow[r,"\varepsilon_a"]\arrow[d,hook] & \mathbb{Z} \arrow[d,two heads,"q"]\\
	 \Angle{k,l} \arrow[r,"f"] \arrow[ru,dashed,"{}^{\exists !}\tilde{f}"] & \mathbb{Z}/2m
\end{tikzcd}
\]
\end{lem}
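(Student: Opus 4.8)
The plan is to regard $q : \integral \to \integral/2m$ as a covering map of Alexandroff spaces and to produce $\tilde f$ by induction on the length $l-k$ of the interval, mimicking the way one lifts a path along $\mathbb{R} \to S^1$.

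First I would record the local structure of $q$. Since $m \ge 2$, any three consecutive integers have pairwise distinct images in $\integral/2m$; consequently, for each $\bar c \in \integral/2m$ the full $q$-preimage of the minimal open set of $\bar c$ is the disjoint union of the minimal open sets of the points of $q^{-1}(\bar c)$, and $q$ maps each of these minimal open sets homeomorphically onto that of $\bar c$. In particular, given $j,\,j{+}1 \in [k,l]$ — which are always $\prec$-comparable, one of them being even — continuity of $f$ (order-preservation of maps from an Alexandroff space to the finite $T_0$-space $\integral/2m$) forces $f(j)$ and $f(j{+}1)$ to be comparable in $\integral/2m$; and for any $w \in \integral$ with $q(w)=f(j)$ there is then exactly one element $w'$ of $\{w{-}1,\,w,\,w{+}1\}$ with $q(w')=f(j{+}1)$ and $w' \prec w$ or $w \prec w'$ according to whether $f(j{+}1) \prec f(j)$ or $f(j) \prec f(j{+}1)$.

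For existence I would induct on $l-k$: for $l=k$ set $\tilde f(k)=a$; if $\tilde f$ is a continuous lift on $[k,j]$ with $\tilde f(k)=a$, extend it by letting $\tilde f(j{+}1)$ be the element $w'$ singled out above from $w=\tilde f(j)$. The only new $\prec$-relation inside $[k,j{+}1]$ is the pair $(j,j{+}1)$, and it is respected by construction, so the extended map is still order-preserving, hence continuous by Proposition \ref{Kconti}, and $q\circ\tilde f=f$ throughout. Uniqueness comes from the same induction: a lift $\tilde g$ with $\tilde g(k)=a$ must make $\tilde g(j{+}1)$ an element of $\{\tilde g(j){-}1,\tilde g(j),\tilde g(j){+}1\}$ lying over $f(j{+}1)$ and comparable to $\tilde g(j)$, and such an element is unique, so $\tilde g=\tilde f$ by induction on $j$. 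Commutativity of the outer square is exactly the requirement $q(a)=f(k)$, which is built into the diagram.

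All the verifications are routine order bookkeeping; the one step that needs care is the evenly-covered property of $q$ in the first paragraph, and it is also the sole place where $m \ge 2$ is genuinely used — for $m=1$ the minimal open set of $\bar 1$ is the whole of $\integral/2$, its preimage does not split into sheets, and both the extension step and its uniqueness break down.
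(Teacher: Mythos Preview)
Your argument is correct and matches the paper's proof: both construct $\tilde f$ inductively from $\tilde f(k)=a$ by choosing at each step the unique element of $\{\tilde f(z){-}1,\tilde f(z),\tilde f(z){+}1\}$ lying over $f(z{+}1)$, verify order-preservation (hence continuity) and uniqueness by the same induction, and invoke $m\ge 2$ precisely to ensure that three consecutive integers have distinct images under $q$. The paper phrases the induction via an explicit three-case formula, while you cast it in terms of the evenly-covered property of $q$, but these are the same construction.
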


\begin{proof}
We inductively define $\tilde{f}$ satisfying $q \circ \tilde{f} = f$ as follows: 
\begin{enumerate}
\item $\tilde{f}(k) = a$ and
\item for $z \in \mathbb{Z}$,
\[
\tilde{f}(z{+}1) = \begin{cases}
				\tilde{f}(z){+}1, & f(z{+}1) = f(z)+\overline{1}, \\
				\tilde{f}(z), & f(z{+}1) = f(z), \\
				\tilde{f}(z){-}1, & f(z{+}1) = f(z)-\overline{1}.
			\end{cases}
\]
\end{enumerate}
We have $q \circ \tilde{f}(k) = q(a) = q \circ \varepsilon_a(k) = f(k)$, and if $q \circ \tilde{f}(z) = f(z)$, then $q \circ \tilde{f}(z{+}1) = f(z{+}1)$ because of the definition of $\tilde{f}$. By induction, $\tilde{f}$ is a lift of $f$.

Next, we will prove the continuity of $\tilde{f}$.
We assume that $\tilde{f}$ is order-preserving on a subset $[k,a]$ of $[k,l]$.
In the case when $a \prec a{+}1$, $a$ is even.
Since $f$ is order-preserving, we have $f(a) \prec f(a{+}1)$.
So $f(a{+}1) = f(a)$ or  $f(a{+}1) = f(a) \pm \overline{1}$ and $f(a)$ is even.
If $f(a{+}1) = f(a)$, then $\tilde{f}(a{+}1)=\tilde{f}(a)$.
If $f(a{+}1) = f(a) \pm \overline{1}$, $\tilde{f}(a{+}1) = \tilde{f}(a) \pm \overline{1}$ and $\tilde{f}(a{+}1)$ is even.
Hence, $\tilde{f}(a) \prec \tilde{f}(a{+}1)$.
In the case when $a \succ a{+}1$, we similarly get $\tilde{f}(a) \succ \tilde{f}(a{+}1)$.
By induction, $\tilde{f}$ is order-preserving, i.e. $\tilde{f}$ is continuous.

Finally, we will prove the uniqueness of $\tilde{f}$.
Let $\tilde{g}$ be another lift in the commutative diagram.
We have $\tilde{f}(k)= \tilde{g}(k)$.
Suppose that $\tilde{f}|_{[k,a]}=\tilde{g}|_{[k,a]}$.
We have $\tilde{f}(a)=\tilde{g}(a)$.
$|\tilde{f}(a)-\tilde{f}(a{+}1)| \leq 1$ and $|\tilde{g}(a)-\tilde{g}(a{+}1)| \leq 1$, so $|\tilde{f}(a{+}1)-\tilde{g}(a{+}1)| \leq 2$.
Then $f(a{+}1)=g(a{+}1)$ because $q\circ f= \tilde{f} = \tilde{g}$ and $m \geq 2$.
By induction, $\tilde{f} = \tilde{g}$.
\end{proof}

From the above lemma, we obtain the following corollary by the construction of $\tilde{f}$.

\begin{cor}\label{lift_cor}
Let $m,n \geq 2$. Assume that the following diagram is commutative: 
\[
\begin{tikzcd}
	 && \mathbb{Z} \arrow[d,two heads,"q"]\\
	 \Angle{k,k+2m} \arrow[r,two heads,"p"] \arrow[rru,"\tilde{f}"] & \mathbb{Z}/2m \arrow[r,"g"] & \mathbb{Z}/2n
\end{tikzcd}
\]
Then, $\deg g = \{\tilde{f}(l) - \tilde{f}(k)\} / 2n$.
\end{cor}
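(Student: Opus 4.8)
The plan is to read $\deg g$ off the endpoint displacement $\tilde f(k{+}2m)-\tilde f(k)$, exploiting that $\tilde f$ is assembled one step at a time in the proof of Lemma \ref{lift}. Recall that $q:\integral\to\integral/2n$ is an Alexandroff model of the universal cover: $|\mathcal K(q)|:|\mathcal K(\integral)|\to|\mathcal K(\integral/2n)|$ is, up to homeomorphism, the exponential map $\mathbb R\to S^{1}$, with deck transformations generated by the translation $z\mapsto z{+}2n$. The surjection $p:[k,k{+}2m]\twoheadrightarrow\integral/2m$ is reduction modulo $2m$, so $|\mathcal K(p)|$ is a loop representing a generator of $\pi_{1}\!\big(|\mathcal K(\integral/2m)|\big)\cong\integral$; hence $g_{\ast}$ carries this generator to $\deg g$ times a generator, i.e.\ the winding number of the loop $|\mathcal K(g\circ p)|$ around $|\mathcal K(\integral/2n)|\homeo S^{1}$ equals $\deg g$. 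Commutativity of the diagram says precisely that $\tilde f$ is a lift of $g\circ p$ along $q$, and I would identify $|\mathcal K(\tilde f)|$ with the path-lift of that loop along the covering $|\mathcal K(q)|$.

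First I would write the telescoping sum $\tilde f(k{+}2m)-\tilde f(k)=\sum_{z=k}^{k{+}2m{-}1}\big(\tilde f(z{+}1)-\tilde f(z)\big)$ and invoke the recursive clause in the construction of $\tilde f$: with $f:=g\circ p$, each summand is $+1$, $0$, or $-1$ according to whether $f(z{+}1)$ equals $f(z){+}\overline 1$, $f(z)$, or $f(z){-}\overline 1$. Thus this sum counts, with sign, the net number of full laps made by $f$ around the cyclically ordered set $\integral/2n$ as $z$ runs over $[k,k{+}2m]$, each lap contributing $\pm 2n$; that is, $\tilde f(k{+}2m)-\tilde f(k)=2n\cdot w$, where $w$ is the winding number of $|\mathcal K(g\circ p)|$. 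Divisibility by $2n$ is automatic, since $q\big(\tilde f(k)\big)=f(k)=f(k{+}2m)=q\big(\tilde f(k{+}2m)\big)$, and the displacement is visibly unchanged if one replaces the basepoint value $a=\tilde f(k)$ by another; so $\{\tilde f(k{+}2m)-\tilde f(k)\}/2n$ is a well-defined integer equal to $w$. Combining with the first paragraph, $\deg g=w=\{\tilde f(k{+}2m)-\tilde f(k)\}/2n$, which is the assertion (with the ``$l$'' of the statement being $k{+}2m$).

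The one step that requires genuine care — and which I expect to be the main, if mild, obstacle — is matching the combinatorial lift $\tilde f$ produced by Lemma \ref{lift} with the topological path-lift along $|\mathcal K(q)|$, and $|\mathcal K(p)|$ with a fixed generator of $\pi_{1}$. This is settled by noting that $\mathcal K(q)$ is a simplicial covering: $|\mathcal K(q)|$ restricts to a homeomorphism on the open star of each vertex $\overline{j}\in\integral/2n$, so path-lifts exist and are unique, and $\tilde f$ manifestly realizes the unique lift of $|\mathcal K(g\circ p)|$ issuing from the vertex $a$. Everything else — the telescoping identity, the signed lap count, the divisibility — is the routine bookkeeping already implicit in the construction of $\tilde f$ in Lemma \ref{lift}.
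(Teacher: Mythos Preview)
Your proof is correct and fills in what the paper leaves as a one-line remark: the paper's entire argument is the sentence ``we obtain the following corollary by the construction of $\tilde f$,'' invoking the recursive clause for $\tilde f$ in Lemma~\ref{lift} without further comment. Your telescoping sum and the covering-space identification of $|\mathcal K(q)|$ make explicit the bridge between the combinatorial lift and the topological degree of $|\mathcal K(g)|$ that the paper simply takes for granted.
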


\begin{pro}\label{poin}
If $f:\mathbb{Z}/2m \to \mathbb{Z}/2n$ satisfies $|\deg f|<m/n$, then it is homotopic to a continuous function $g:\mathbb{Z}/2m \to \mathbb{Z}/2n$ satisfying $g(\overline{0})=f(\overline{0})+\overline{1}$.
\end{pro}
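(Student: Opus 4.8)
The plan is to pass from $f$ to a normal form and then change the value at $\overline{0}$ by a single elementary homotopy. I would first reduce to $d:=\deg f\ge 0$ using the self-homeomorphism $s\colon\integral/2m\to\integral/2m$, $\overline z\mapsto\overline{-z}$, which fixes $\overline 0$, has degree $-1$, and satisfies $|\!\deg(f\circ s)|=|\deg f|<m/n$; since $f=(f\circ s)\circ s$ and $(f\circ s)(\overline 0)=f(\overline 0)$, a map $g_0\simeq f\circ s$ with $g_0(\overline 0)=f(\overline 0)+\overline 1$ produces $g:=g_0\circ s\simeq f$ with the required value. So assume $d\ge 0$. Since $\overline 0$ is an open (minimal) point, $\overline 0\prec\overline 1$ and $\overline 0\prec\overline{2m-1}$, hence $f(\overline 0)\prec f(\overline 1)$ and $f(\overline 0)\prec f(\overline{2m-1})$. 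If $f(\overline 0)$ is a closed point it is maximal, so these relations force $f(\overline 1)=f(\overline{2m-1})=f(\overline 0)$; then $g$, defined to agree with $f$ except that $g(\overline 0):=f(\overline 0)+\overline 1$ (the adjacent open point), is continuous and satisfies $g\prec f$, so $f\simeq g$ by Stong's theorem. (No degree hypothesis is needed in this case.)

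Now suppose $f(\overline 0)$ is an open point and write it as the class of an even integer $a$. By Lemma~\ref{lift} lift $f$ to a continuous $\tilde f\colon[0,2m]\to\integral$ with $\tilde f(0)=a$; Corollary~\ref{lift_cor} gives $\tilde f(2m)=a+2nd$, and for $d\ge 0$ the hypothesis $|d|<m/n$ reads $0\le 2nd\le 2m-2$. I would then introduce an auxiliary map $f'\colon\integral/2m\to\integral/2n$ of degree $d$ with $f'(\overline 0)=f(\overline 0)$ and $f'(\overline 1)=f'(\overline{2m-1})=f(\overline 0)+\overline 1$, namely the unique map whose lift $\tilde{f'}$ on $[0,2m]$ has $\tilde{f'}(0)=a$, increases by one unit per step until it reaches $a+2nd+1$, remains at that value through the point $2m-1$, and equals $a+2nd$ at $2m$. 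The rising portion, of length $2nd+1$, fits within $[0,2m-1]$ exactly because $2nd\le 2m-2$; one checks (the height parities matching, as $a$ is even) that $\tilde{f'}$ is order-preserving, and it descends since $a+2nd\equiv a\pmod{2n}$.

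Next, $\tilde f$ and $\tilde{f'}$ are continuous functions $[0,2m]\to\integral$ agreeing at the endpoints ($a$ at $0$, $a+2nd$ at $2m$) with $\Height(0)=\Height(a)=0$, so by Proposition~\ref{stan} both are homotopic relative to $\{0,2m\}$ to the same standard function $h$, whence $\tilde f\sim\tilde{f'}$; such a rel-endpoints homotopy descends — its frozen endpoint values being congruent mod $2n$ — to a homotopy $f\simeq f'$. Finally, the map $g$ agreeing with $f'$ except that $g(\overline 0):=f(\overline 0)+\overline 1$ is continuous: the constraints at $\overline 0$ become $g(\overline 0)\prec g(\overline 1)=g(\overline 0)=g(\overline{2m-1})$, which hold because $f'(\overline 1)=f'(\overline{2m-1})=f(\overline 0)+\overline 1$; and $f'\prec g$, so $f'\simeq g$ by Stong's theorem. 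Combining, $f\simeq g$ with $g(\overline 0)=f(\overline 0)+\overline 1$, as desired.

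The crux is the construction of $f'$: everything hinges on the ``detoured'' lift $\tilde{f'}$ being a genuine order-preserving function that descends to $\integral/2m$, and this is the one point where the strict inequality $|\deg f|<m/n$ is used, namely through the slack $2nd\le 2m-2$ which makes room for the unit-step rise on $[0,2m-1]$. The remaining verifications — the height/parity bookkeeping for $\tilde f$ and $\tilde{f'}$, and the descent of a rel-endpoints homotopy of lifts to a homotopy of maps on $\integral/2m$ — are routine.
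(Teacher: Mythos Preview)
Your proof is correct and follows the same overall strategy as the paper: reduce to nonnegative degree, split on the parity of $f(\overline 0)$, lift to the Khalimsky line, normalize via Proposition~\ref{stan}, descend, and finish with an elementary Stong comparison. The execution differs only in details---your closed-point case is handled by a single one-step modification with no lifting (the paper instead shifts the lifting domain to $[2m{-}1,4m{-}1]$), and in the open-point case you introduce the auxiliary map $f'$ whose lift overshoots by one so that a single change at $\overline 0$ suffices, whereas the paper modifies the descended normal form $h$ in two steps at $\overline{2m{-}1}$ and then at $\overline 0$.
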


\begin{proof}
Without loss of generality, we may assume that $f(\overline{0})=\overline{k}$ and $\deg f \geq 0$.

First, we consider the case when $\Height(0)=\Height(k)$, or equivalently, the case when $k$ is even.
There is a lift $\tilde{f}:[0,2m] \to \mathbb{Z}$ of $f \circ q$ satisfying $\tilde{f}(0)=k$ by Lemma \ref{lift}.
Then we have $\tilde{f}(k) \leq \tilde{f}(l)$ by Corollary \ref{lift_cor}, and $\tilde{f} \sim h_{\tilde{f}}$ by Proposition \ref{stan}.
For a map $h:\mathbb{Z}/2m \to \mathbb{Z}/2n$ defined by $h(\overline{z})=q \circ {h_{\tilde{f}}}(z)$, we obtain $f \simeq h$ since $\tilde{f} \sim h_{\tilde{f}}$.
From the hypothesis, we have $|\deg h| = |\deg f|< m/n$ and $h(\overline{2m{-}2})=h(\overline{2m{-}1})=h(\overline{0})=\overline{k}$.
Let $f_1,f_2:\mathbb{Z}/2m \to \mathbb{Z}/2n$ be as follows:
$$f_1(\overline{z})=\begin{cases} h(\overline{z}), & \overline{z} \ne \overline{2m{-}1}, \\ \overline{k}+\overline{1}, & \overline{z} = \overline{2m{-}1}.\end{cases}$$
$$f_2(\overline{z})=\begin{cases} f_1(\overline{z}), & \overline{z} \ne \overline{0}, \\ \overline{k}+\overline{1}, & \overline{z} = \overline{0}.\end{cases}$$
In this case, we have $f_2 \simeq h$, since $h \succ f_1 \prec f_2$.
Therefore, we obtain $f \simeq h \simeq f_2$ and hence $f_2(\overline{0})=\overline{k} + \overline{1}=f(\overline{0})+\overline{1}$.

Second, we consider the case when $\Height(0)\ne\Height(k)$, or equivalently, the case when $k$ is odd.
Then we have $\Height(2m-1) = \Height(k)$, and $f(\overline{2m-1})=f(\overline{0})=\overline{k}$ since $k$ is odd.

In this case, there is a lift $\tilde{f}:[2m-1, 4m-1] \to \mathbb{Z}$ of $f\circ q | : [2m-1, 4m-1] \to \mathbb{Z}/2n$ satisfying $\tilde{f}(2m-1)=k$, which satisfies $\tilde{f} \sim h_{\tilde{f}}$.
Then, for a map $h:\mathbb{Z}/2m \to \mathbb{Z}/2n$ defined by $h(\overline{z})=q \circ {h_{\tilde{f}}}(z)$, we obtain $f \simeq h$ since $\tilde{f} \sim h_{\tilde{f}}$.

Finally, we have $h(\overline{0})=\overline{k}+\overline{1}=f(\overline{0})+\overline{1}$, which completes the proof of this proposition.
\end{proof}

\begin{thm}
Let $f, \,g : \mathbb{Z}/2m \to \mathbb{Z}/2n$ be two distinct continuous functions.
Then $f \sim g$ if and only if $\deg f = \deg g$ and $|\deg f| <m/n$.
\end{thm}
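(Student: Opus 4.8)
The plan is to establish both directions using the lifting machinery (Lemma~\ref{lift} and Corollary~\ref{lift_cor}) together with the ``shift'' result Proposition~\ref{poin}, which lets us move the value at $\overline 0$ up by $\overline 1$ without changing the homotopy class (provided $|\deg f| < m/n$).

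For the ``only if'' direction, suppose $f \sim g$. Since a homotopy of maps $\mathbb{Z}/2m \to \mathbb{Z}/2n$ induces a homotopy of the realizations $|\mathcal{K}(f)| \simeq |\mathcal{K}(g)| : S^1 \to S^1$ by McCord's theorem, we get $\deg f = \deg g$; alternatively one argues directly with Stong's theorem that homotopic finite-space maps between circles have equal degree, by tracking how a sequence $f = f_0 \prec f_1 \succ \cdots$ affects the lifted displacement $\tilde f(l) - \tilde f(k)$ via Corollary~\ref{lift_cor}. The bound $|\deg f| < m/n$ is the genuinely new content and is the main obstacle: I would argue by contradiction. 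If $|\deg f| \ge m/n$, then choosing a lift $\tilde f : [0, 2m] \to \mathbb{Z}$ one sees that $|\tilde f(2m) - \tilde f(0)| = 2n|\deg f| \ge 2m$, so $\tilde f$ must be (up to the sequence reductions of Corollary~\ref{mono}) essentially as ``steep'' as possible --- forced to increase by exactly $1$ at every even step. Such an $\tilde f$ projects to an $f$ that is rigid: any map $g$ with $g \sim f$ must have the same lift after normalizing the basepoint, hence $g = f$, contradicting $f \ne g$. So distinct homotopic maps force the strict inequality.

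For the ``if'' direction, assume $\deg f = \deg g = d$ with $|d| < m/n$. It suffices to show that $f$ is homotopic to a single normal form depending only on $d$ (and the common basepoint, which can be rotated into a standard position). Here is where Proposition~\ref{poin} does the work: by repeatedly applying it, $f$ is homotopic to a map whose value at $\overline 0$ is any prescribed residue $\overline k$; in particular we may move $f$ and $g$ so they agree at $\overline 0$. Once $f(\overline 0) = g(\overline 0)$, lift both to $\tilde f, \tilde g : [0, 2m] \to \mathbb{Z}$ with the same initial value; by Corollary~\ref{lift_cor} they have the same terminal value $2nd + \tilde f(0)$. Now Corollary~\ref{mono} replaces each by a monotone lift, and Proposition~\ref{stan} shows each monotone lift with fixed endpoints is homotopic (rel endpoints) to the canonical ``staircase'' $h$. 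Projecting back, $f \simeq q\circ h \simeq g$, completing the argument.

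The main technical point to be careful about is the bookkeeping of heights (parities) when applying Proposition~\ref{poin} and Proposition~\ref{stan}: the hypothesis $\Height(k) = \Height(f(k))$ in Proposition~\ref{stan} may fail for the chosen basepoint, which is exactly why Proposition~\ref{poin} was phrased to shift the basepoint by one and why its proof splits into the even/odd cases. So in the ``if'' direction I would first use Proposition~\ref{poin} to arrange that the common basepoint value has the correct parity, and only then run the monotonization and staircase normalization. The inequality $|d| < m/n$ is used precisely to guarantee Proposition~\ref{poin} applies at every stage, and also to guarantee the staircase $h$ genuinely stabilizes (reaches its final value) before wrapping around, so that $h$ is well-defined as a map on $\mathbb{Z}/2m$.
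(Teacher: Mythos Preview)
Your argument is correct and, for the ``if'' direction, follows exactly the paper's route: use Proposition~\ref{poin} to arrange $f(\overline 0)=g(\overline 0)$ with the right parity, lift both maps via Lemma~\ref{lift}, match terminal values by Corollary~\ref{lift_cor}, and conclude $\tilde f \sim h_{\tilde f}=h_{\tilde g}\sim \tilde g$ by Proposition~\ref{stan}. The only difference is that the paper declares the ``only if'' direction obvious and omits it, whereas you supply the rigidity argument (noting that $|\deg f|\le m/n$ always by the Lipschitz bound on lifts, and that equality forces $\tilde f(z)=\pm z+k$ with $k$ even, so any map comparable to $f$ in the Stong order must coincide with it); this is a sound elaboration of what the paper leaves implicit.
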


\begin{proof}
It is sufficient to show `if' part.
So we assume that $\deg f = \deg g$ and $|\deg f| <m/n$.
By reversing the orientation of Khalimsky circle, if necessary, we may assume that $0 \leq \deg f = \deg g < m/n$.
Let $\overline{k}=f(\overline{0})$.

Without loss of generality, we may assume that $f(\overline{0}) = g(\overline{0})$ and $\Height(\overline{0}) = \Height(\overline{k})$ by Proposition \ref{poin}.
By Lemma \ref{lift}, there exist a lift $\tilde{f}$ of $f \circ p$ and a lift $\tilde{g}$ of $g \circ p$ which satisfy $\tilde{f}(0) = \tilde{g}(0) = k$.
By Corollary\ref{lift_cor}, $\tilde{f}(2m) = \tilde{g}(2m) = k + 2n \cdot \deg f$.
Thus, $\tilde{f} \sim h_{\tilde{f}} = h_{\tilde{g}} \sim \tilde{g}$ by Proposition \ref{stan}, so $f \simeq g$.

\end{proof}

\bibliographystyle{alpha}
\bibliography{Finite}
\vskip1ex

\end{document}